\newtheorem{MainThm}{Theorem}
\newtheorem{thm}{Theorem}[section]
\newtheorem{lem}[thm]{Lemma}
\newtheorem{prop}[thm]{Proposition}
\theoremstyle{definition}
\theoremstyle{remark}
\newtheorem{rem}[thm]{Remark}
\numberwithin{equation}{section}
\newcommand\mnote[1]{\marginpar{\tiny #1}}
\newcommand{\bF}{\mathbb{F}}
\newcommand{\bL}{\mathbb{L}}
\newcommand{\bP}{\mathbb{P}}
\newcommand{\bQ}{\mathbb{Q}}
\newcommand{\bR}{\mathbb{R}}
\newcommand{\bZ}{\mathbb{Z}}
\newcommand\lra{\longrightarrow}
\newcommand\lla{\longleftarrow}
\newcommand\trf{\mathrm{trf}}
\newcommand\Emb{\mathrm{Emb}}
\newcommand\hocolim{\mathrm{hocolim \,}}
\newcommand\Coker{\mathrm{Coker}}
\newcommand\Ker{\mathrm{Ker}}
\newcommand{\hcoker}{/\!\!/}
\newcommand{\CircNum}[1]{\ooalign{\hfil\raise .00ex\hbox{\scriptsize #1}\hfil\crcr\mathhexbox20D}}
\newcommand{\hocofib}{\mathrm{hocofib}}
\mathchardef\ordinarycolon\mathcode`\:
\title{Homological stability for unordered configuration spaces}
\author{Oscar Randal-Williams}
\thanks{The author was supported by the EPSRC PhD Plus scheme, ERC Advanced Grant No.\ 228082, and the Danish National Research Foundation via the Centre for Symmetry and Deformation.}
\address{Institut for Matematiske Fag\\
Universitetsparken 5\\
DK-2100 K{\o}benhavn {\O}\\
Denmark}
\email{o.randal-williams@math.ku.dk}
\subjclass[2010]{55R40, 55R80}
\keywords{configuration spaces, homology stability}
\begin{document}

\begin{abstract}
This paper consists of two related parts. In the first part we give a self-contained proof of homological stability for the spaces $C_n(M;X)$ of configurations of $n$ unordered points in a connected open manifold $M$ with labels in a path-connected space $X$, with the best possible integral stability range of $2* \leq n$. Along the way we give a new proof of the high connectivity of the complex of injective words. If the manifold has dimension at least three, we show that in rational homology the stability range may be improved to $* \leq n$.

In the second part we study to what extent the homology of the spaces $C_n(M)$ can be considered stable when $M$ is a closed manifold. In this case there are no stabilisation maps, but one may still ask if the dimensions of the homology groups over some field stabilise with $n$. We prove that this is true when $M$ is odd-dimensional, or when the field is $\bF_2$ or $\bQ$. It is known to be false in the remaining cases.
\end{abstract}
\maketitle

\section{Introduction and statement of results}

We define the space of ordered configurations of $n$ points in a $d$-manifold $M$ with labels in a path-connected space $X$ by
$$\widetilde{C}_n(M;X) := \Emb(\{1, 2, ..., n\}, M) \times X^n,$$
and the space of unordered configurations by
$$C_n(M;X) := \widetilde{C}_n(M;X)/\Sigma_n = \Emb(\{1, 2, ..., n\}, M) \times_{\Sigma_n} X^n.$$
If $M$ is the interior of a manifold with boundary $\bar{M}$, for each point $\mathcal{E} \in \partial\bar{M}$ there is a map
$$s_\mathcal{E} : C_n(M;X) \lra C_{n+1}(M;X)$$
given by pushing configuration points away from the boundary and then adding a new point near $\mathcal{E}$. We will describe this construction precisely in \S \ref{sec:StabMaps}.

\begin{MainThm}\label{thm:Main}
Let $M$ be a connected open manifold which is the interior of a compact manifold with boundary and has dimension greater than 1. Then each map $s_\mathcal{E} : C_n(M ; X) \to C_{n+1}(M ; X)$ is split injective on integral homology in all degrees and an integral homology isomorphism in degrees $2* \leq n$.
\end{MainThm}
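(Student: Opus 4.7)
My plan is to use a semi-simplicial resolution of $C_n(M;X)$ by ``arc'' data, run the spectral sequence of the resolution, and use the high connectivity of the complex of injective words to force stability in the optimal range.

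First I would construct an augmented semi-simplicial space $\Arc_\bullet(n) \to C_n(M;X)$: a $p$-simplex in $\Arc_p(n)$ is an unordered configuration $c \in C_n(M;X)$ together with an ordered $(p+1)$-tuple of arcs in $\bar M$ from $\mathcal E$ to distinct points of $c$, pairwise disjoint except at $\mathcal E$, with face maps that forget an arc. Pushing the marked endpoints of the $(p+1)$ arcs out through $\mathcal E$ via isotopy extension should give a weak equivalence
\[
\Arc_p(n) \;\simeq\; C_{n-p-1}(M;X) \times X^{p+1},
\]
the $X^{p+1}$-factor recording the labels of the extruded points; under this equivalence each face map becomes (up to K\"unneth) the stabilization map $s_\mathcal E$ on the $C_\bullet$-factor.

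The core technical estimate is that the augmentation $|\Arc_\bullet(n)| \to C_n(M;X)$ is highly connected. A microfibration / local-to-global argument identifies the homotopy fibre over a configuration $c$ with the geometric realization of the semi-simplicial set of injective words on the $n$ points of $c$, so this reduces to the purely combinatorial statement that the complex of injective words on $n$ letters is $(n-2)$-connected---a classical result which the abstract promises to reprove in a form clean enough to extract the sharp stability range. This is the step I expect to demand the most care, since the exact shape of the connectivity bound and its uniformity over the fibres is what lets the spectral sequence produce the optimal slope.

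Homological stability in degrees $2* \le n$ should then drop out of the spectral sequence
\[
E^1_{p,q} = H_q(\Arc_p(n)) \;\cong\; \bigoplus_{i} H_i(C_{n-p-1}(M;X)) \otimes H_{q-i}(X^{p+1})
\]
of the augmented semi-simplicial space, which converges to zero in total degrees below the augmentation's connectivity. The $d^1$-differential is an alternating sum of $p+1$ copies of the stabilization map, so it alternates between $0$ and $s_{\mathcal E *}$ with the parity of $p$; inductively invoking the statement of the theorem for $n' < n$ in degrees $2* \le n'-1$ makes the $E^2$-page vanish in a sufficiently large wedge-shaped region to force $s_{\mathcal E *}\colon H_q(C_{n}(M;X)) \to H_q(C_{n+1}(M;X))$ to be an isomorphism for $2q \le n$. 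Split injectivity in all degrees is a separate and easier matter: the ordered stabilization $\widetilde C_n(M;X) \hookrightarrow \widetilde C_{n+1}(M;X)$ has a retraction by forgetting the $(n+1)$-st coordinate, and composing with the transfer associated to the finite cover $\widetilde C_{n+1}(M;X)/\Sigma_n \to C_{n+1}(M;X)$ should yield a homology-level splitting of $s_{\mathcal E *}$.
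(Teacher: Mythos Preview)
Your route is genuinely different from the paper's. The paper does \emph{not} use an arc resolution; its $p$-simplices are configurations with an ordered $(p{+}1)$-tuple of distinguished points (no arcs at all). Consequently the level $C_n(M;X)^p$ is not equivalent to $C_{n-p-1}(M;X)$: it fibres over the ordered configuration space $\widetilde C_{p+1}(M;X)$ with fibre $C_{n-p-1}(M_{p+1};X)$, where $M_{p+1}$ is $M$ with $p{+}1$ punctures. Because of this, the paper cannot run the ``symmetric groups'' spectral sequence you describe. Instead it compares the resolutions for $n$ and $n{+}1$ via a \emph{relative} spectral sequence, proves a separate puncturing lemma showing that $H_*(C_{n+1}(M;X),C_n(M;X))$ is insensitive to adding or removing punctures in the expected range, and then shows that a certain composite $R_{n-1}(M)\to R_n(M)$ is simultaneously surjective and factors through a class $\tau\in H_{d-1}(C_2(\bR^d))$ which can be killed by a further stabilisation. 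This only yields epimorphism in degrees $2*\le n$ and isomorphism in $2*\le n-2$; the full range is then recovered by combining with the transfer/split-injectivity argument (your last paragraph, which matches the paper). What the paper's approach buys is that the fibre of the augmentation really is the complex of injective words on the nose, so the connectivity input is exactly the classical one; the price is the puncturing and $\tau$ analysis.

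Your arc strategy is a reasonable alternative in principle, but as written it has a real gap at the step you yourself flag as most delicate. The homotopy fibre of $|\Arc_\bullet(n)|\to C_n(M;X)$ over a configuration $c$ is \emph{not} the complex of injective words on the points of $c$: it is the semi-simplicial \emph{space} of ordered tuples of disjoint arcs from $\mathcal E$ to distinct points of $c$. There is a forgetful map to the injective-words complex recording only the endpoints, but its fibres are spaces of disjoint arcs with prescribed endpoints, and these need not be contractible. In dimension $\ge 3$ one can hope to contract them by general position, but the theorem includes $d=2$, where arcs in a punctured surface have infinitely many isotopy classes and the fibre is a genuine arc complex whose high connectivity is a separate (and harder) theorem than the injective-words statement. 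The same issue undermines your $d^1$ computation: the assertion that all face maps are homotopic to $s_{\mathcal E}$ amounts to being able to permute the arcs, which is unobstructed for $d\ge 3$ but in $d=2$ involves braiding and is exactly the kind of phenomenon that prevents the naive argument from going through. So either restrict to $d\ge 3$ and supply the general-position argument, or follow the paper's arc-free resolution and accept the puncturing/relative machinery.
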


\begin{MainThm}\label{thm:MainRational}
If $M$ has dimension greater than $2$, each $s_\mathcal{E}$ is a rational homology isomorphism in degrees $* \leq n$.
\end{MainThm}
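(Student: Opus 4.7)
The plan is to revisit the spectral sequence argument used in the proof of Theorem A and show that, in dimension $d \geq 3$, its connectivity input is roughly twice as strong rationally as it is integrally.

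Recall that the proof of Theorem A proceeds by constructing a semi-simplicial resolution (built from injective words of disjoint embedded arcs from a boundary basepoint to the configuration points) whose $p$-simplices are related back to $C_{n-p-1}(M;X)$ via iterated stabilisation maps, and whose geometric realisation has integral connectivity of order $n/2$. The associated spectral sequence, combined with an induction on $n$, gives the slope $\tfrac{1}{2}$ integral stability. In order to obtain the slope $1$ rational stability asserted by Theorem B, it suffices to show that the geometric realisation of this same semi-simplicial space has rational connectivity of order $n$ when $d \geq 3$, and then to run the same spectral sequence argument with $\bQ$-coefficients.

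The geometric reason for the improvement is that in codimension at least two, any two disjoint embedded arcs can be continuously interchanged through disjoint arcs, so that "swapping" two components of an ordered tuple of arcs becomes a path in the arc complex rather than an equality of simplices. I would exploit this by comparing the semi-simplicial space above with an auxiliary ordered version whose $p$-simplices are ordered $(p+1)$-tuples of disjoint arcs; the forgetful map to the unordered version is simplicially a principal $\Sigma_{p+1}$-covering. A shelling / bad-simplex argument analogous to the one used in the first half of the paper to prove the high connectivity of the complex of injective words, but now exploiting the extra room provided by dimension $\geq 3$ to slide arcs past one another, should show that the ordered complex is $(n-1)$-connected. Since $\lvert \Sigma_{p+1}\rvert$ is invertible in $\bQ$, a transfer argument then transfers this to rational connectivity $n-1$ for the original, unordered semi-simplicial space.

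The main obstacle is this middle step: verifying that the surgery / shelling argument, which integrally gives connectivity only $\sim n/2$, actually gives $\sim n$ once one is allowed to freely permute the ordering of arcs and to move arcs past each other in dimension $\geq 3$. Once the improved rational connectivity is in place, feeding it into the same spectral sequence as in the proof of Theorem A, now with rational coefficients, and running the identical induction on $n$, yields the stability range $* \leq n$.
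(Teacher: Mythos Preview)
Your proposal rests on a misreading of the resolution used in the paper, and as a result aims at the wrong target.

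First, the semi-simplicial resolution in the proof of Theorem~A is \emph{not} built from arcs; the $i$-simplices are simply
\[
C_n(M;X)^i = \{(C,p_0,\dots,p_i) : p_j \in C,\ p_j\ \text{distinct}\},
\]
i.e.\ ordered $(i+1)$-tuples of points of the configuration itself. The fibre over a configuration $C$ is the complex of injective words $\|F(C)^\bullet\|$, which by Proposition~\ref{prop:ConnectivityCx} is already a wedge of $(n-1)$-spheres. So the augmentation $\|C_n(M;X)^\bullet\| \to C_n(M;X)$ is already $(n-1)$-connected \emph{integrally}; there is nothing to improve here, and no ``ordered'' auxiliary complex or transfer argument can raise this further. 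In particular your claim that the realisation has connectivity of order $n/2$ is false, and the proposed shelling argument is solving a nonexistent problem.

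The slope-$\tfrac{1}{2}$ integral range does not come from the connectivity of the resolution; it comes from the inductive structure of the spectral sequence argument and from Lemma~\ref{lem:TauIsZero}, where the self-homotopy class $\tau$ is only shown to vanish integrally in degrees $2*\leq n$. The paper's proof of Theorem~B works by (i) observing that for $d>2$ the class $t_*[S^1]$ lies in $H_1(C_2(\bR^d);\bZ)\cong\bZ/2$, so $\tau$ vanishes with $\bZ[\tfrac{1}{2}]$-coefficients in \emph{all} degrees, and (ii) carrying out a finer analysis of the higher differentials $d_2,\dots,d_{n+1}$ in the relative augmented spectral sequence, using product maps $R_{n-k-1}(M)\wedge R_k^\bullet(\bR^d)\to R_n^\bullet(M)$ and a comparison with a trivially embedded $2$-disc to show these differentials vanish rationally. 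Your proposal does not engage with either of these points, and in its present form would not yield the stated improvement.
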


If $M$ is a closed manifold, there are no possible stabilisation maps as there is no distinguished place to add a new point. However one may still study the sequences $n \mapsto \mathrm{dim}_\bF H_i(C_n(M);\bF)$ and ask if they stabilise. In order that these dimensions be finite, we restrict to manifolds $M$ having finite-dimensional $\bF$-homology in each degree.

\begin{MainThm}\label{thm:MainClosed}
Suppose that $\dim(M)$ is odd and $\bF$ is any field, or that $\dim(M)$ is even and $\bF$ is $\bF_2$ or $\bQ$. Then $\mathrm{dim}_\bF H_i(C_n(M);\bF)$
is independent of $n$ in the range that $H_i(C_n(M\setminus *);\bF)$ is stable. This is false in the remaining cases.

Furthermore, over $\bQ$ the evident transfer map $H_*(C_n(M);\bQ) \to H_*(C_{n-1}(M);\bQ)$ induces an isomorphism.
\end{MainThm}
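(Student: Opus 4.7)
Fix a basepoint $* \in M$ and set $N \deq M \setminus *$; by Theorems A and B, $H_*(C_n(N);\bF)$ is $n$-independent in the stability range. The plan is to deduce the claim from the long exact sequence of the pair $(C_n(M), C_n(N))$. The open inclusion $C_n(N) \hookrightarrow C_n(M)$ has closed complement the locus of configurations passing through $*$, which is canonically homeomorphic to $C_{n-1}(N)$ by deleting the distinguished point. The normal bundle of this inclusion is the trivial rank-$d$ bundle $C_{n-1}(N) \times T_*M$, so the Thom isomorphism (which needs no orientation hypothesis since the bundle is trivial) gives $H_i(C_n(M), C_n(N); \bF) \cong H_{i-d}(C_{n-1}(N); \bF)$ and thus a long exact sequence
$$\cdots \to H_i(C_n(N);\bF) \to H_i(C_n(M);\bF) \to H_{i-d}(C_{n-1}(N);\bF) \xrightarrow{\partial_n} H_{i-1}(C_n(N);\bF) \to \cdots.$$
In the stability range, every term other than $H_i(C_n(M);\bF)$ is $n$-independent, and by rank-nullity it then suffices to show that the ranks of the boundary maps $\partial_n$ also stabilise.

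The crux of the argument is the analysis of $\partial_n$. Geometrically $\partial_n$ sends a cycle $\sigma \subset C_{n-1}(N)$ to the class of $\sigma \times S^{d-1}_*$, where the extra point traces out a small sphere around $*$ in $N$. Although $[S^{d-1}_*]$ is null-homologous in $H_{d-1}(N;\bF)$, the sphere $S^{d-1}_*$ links the points of $\sigma$ nontrivially, and $\partial_n$ is naturally identified with an operation of Browder-bracket type in the $E_d$-algebra structure on $\bigsqcup_k C_k(N)$. Comparing $\partial_n$ with the stabilisation map of Theorem A (which is an isomorphism in the stable range), the rank of $\partial_n$ becomes $n$-independent up to a ``sign twist'' determined by how the antipodal involution on $S^{d-1}$ interacts with orientations. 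When $d$ is odd this twist is trivial over any field. When $d$ is even the twist involves a factor of $2$, which is invertible in $\bQ$ and vanishes in $\bF_2$ (both giving $n$-independence), whereas over $\bF_p$ for odd $p$ it can produce genuinely $n$-dependent rank drops. In the good cases rank-nullity then yields the claimed $n$-independence of $\dim_\bF H_i(C_n(M);\bF)$; in the remaining cases concrete counterexamples obtained from the same long exact sequence (for example, even-dimensional spheres or projective spaces over suitable $\bF_p$) establish failure.

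For the final transfer statement over $\bQ$, I would construct the transfer from the correspondence
$$C_{n-1}(M) \;\longleftarrow\; \widetilde{C}_n(M)/\Sigma_{n-1} \;\longrightarrow\; C_n(M),$$
whose right-hand arrow is an $n$-sheeted covering; its Becker--Gottlieb transfer composed with the forgetful ``drop the distinguished point'' map defines the required map $H_*(C_n(M);\bQ) \to H_*(C_{n-1}(M);\bQ)$. Over $\bQ$, where $H_*(C_k(M);\bQ) = H_*(\widetilde{C}_k(M);\bQ)^{\Sigma_k}$, this transfer acts as a natural symmetrisation; combined with the $n$-independence of dimensions already established, and an injectivity check via comparison with the analogous transfer for the open manifold $N$ (for which the stabilisation map of Theorem A provides the inverse rationally in the stable range), one concludes that the transfer is an isomorphism. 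The principal technical obstacle throughout is the detailed geometric identification of $\partial_n$ and the careful sign/orientation analysis required to separate the good cases from the bad.
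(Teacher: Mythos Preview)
Your setup via the long exact sequence of the pair $(C_n(M), C_n(N))$ and your identification of $\partial_n$ as multiplication by a Browder-type class are exactly what the paper does. The commutator you gesture at is made precise there as $\Delta * [1] = [1]*\Delta + \tau$, with $\tau \in H_{d-1}(C_2(\bR^d);\bZ) \cong H_{d-1}(\bR P^{d-1};\bZ)$ the Hurewicz image of $v \mapsto \{0,v\}$. For $d$ odd one has $\tau = 0$, and for $d$ even $\tau$ is twice a generator, hence zero over $\bF_2$; in these cases stabilisation commutes with $\Delta$ and your argument is correct and matches the paper.

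The genuine gap is the rational even-dimensional case. Your claim that ``the twist involves a factor of $2$, which is invertible in $\bQ$'' is backwards: over $\bQ$ the class $\tau$ is \emph{nonzero} precisely because $2$ is invertible, so $\Delta$ and $[1]$ do \emph{not} commute and there is no reason the ranks of $\partial_n$ should stabilise under stabilisation. The paper's fix is to replace $[1]$ by the transfer $t_n$ on the open manifold: one checks $t_n(\Delta * m) = \Delta * t_{n-1}(m) + t_1(\Delta)*m$, and $t_1(\Delta) \in H_{d-1}(C_0(N);\bQ) = 0$ for degree reasons, so transfer genuinely commutes with $\Delta$. Since $t_n$ is a rational isomorphism in the stable range (Proposition~\ref{prop:Trf}), this stabilises $\Ker\partial_n$ and $\Coker\partial_n$.

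For the transfer isomorphism on the closed manifold, your outline is also too optimistic. The paper explicitly notes that the connecting map $C_n(M) \to S^d \wedge C_{n-1}(N)_+$ does \emph{not} commute with transfers, so the long exact sequence cannot be used to compare $t_n$ on $M$ with $t_n$ on $N$ as you suggest. Instead the paper introduces a different resolution $D_n(M)^\bullet \to C_n(M)$ whose $i$-simplices are configurations together with $(i+1)$ ordered points \emph{not} in the configuration; this resolution admits a geometric fibrewise transfer (via fibrewise symmetric products), and on each simplicial level reduces to the transfer on an open manifold, where Proposition~\ref{prop:Trf} applies directly.
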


\begin{rem}
We do not claim any originality for Theorem \ref{thm:Main}, although it does not seem to have appeared in this level of generality before. In the case $X=*$, the fact that these spaces exhibit homological stability at all is due to McDuff \cite{McDuff} and a stability range similar to the above was later obtained by Segal \cite[Proposition A.1]{SegalRational}. For $M = \bR^d$ and $X$ arbitrary, the fact that these spaces exhibit homological stability follows from the work of F.\ Cohen \cite[\S III]{CLM}, who in fact completely computed the $\bF_p$-homology of $C_n(\bR^d;X)$. A geometric approach to stability in this case was given by Lehrer and Segal \cite[Theorem 3.2]{LehrerSegal}, though their methods are somewhat indirect and so unable to provide a stability range. Our improved stability range for rational homology for manifolds of dimension at least three in Theorem \ref{thm:MainRational} seems to be new.

Theorem \ref{thm:MainClosed} for $\dim(M)$ odd or over $\bF_2$ was originally obtained by B{\"o}digheimer, Cohen and Taylor \cite{BCT} and over $\bF_2$ was independently obtained by Milgram and L{\"o}ffler \cite{ML}. It is the case of a general closed manifold and the field $\bQ$ that is our main contribution. 

T.\ Church has a preprint \cite{ChurchConf} in which he establishes rational homological stability in the representational sense (for the definition of which we refer to his paper with Farb \cite{CF}) of the ordered configuration spaces of a manifold, and this in particular implies Theorems \ref{thm:MainRational} and \ref{thm:MainClosed} for the field $\bQ$.
\end{rem}

\subsection{Outline}

In \S \ref{sec:SimplicialSpaces} we establish some notation for semi-simplicial spaces. In \S \ref{sec:Construction} we describe what seems to be a new construction in semi-simplicial sets or spaces, and prove a proposition which allows us to identify its homotopy type under certain conditions. Using this we prove the high-connectivity of a semi-simplicial set of ordered subsets of a fixed set $C$ (known as the ``complex of injective words" in the literature).

In \S \ref{sec:StabMaps} we give a careful description of the stabilisation maps $s_\mathcal{E}$ we will be studying, and in \S \ref{sec:SymmetricGroups} we prove Theorem \ref{thm:Main} for $B\Sigma_n = C_n(\bR^\infty)$, as this has a simpler proof than the general case. In \S \ref{sec:ConfigurationSpaces} we give the proof of Theorem \ref{thm:Main} for general configuration spaces.

In \S \ref{sec:Transfer} we recall a transfer argument of Nakaoka which lets us deduce that all the stabilisation maps $s_\mathcal{E}$ are split injective on integral cohomology, and we use this in \S\ref{sec:Improvement} to finish the proof of Theorem \ref{thm:Main} and to prove Theorem \ref{thm:MainRational}.

In \S \ref{sec:Closed} we study the homology of configuration spaces of closed manifolds, and use most of the previous results to prove Theorem \ref{thm:MainClosed}.

\subsection{Acknowledgements}
I would like to thank Martin Palmer for his careful reading of this paper, and for pointing out some errors in earlier versions of the homology stability argument for open manifolds. I am grateful to Thomas Church for sending me a preprint of his work.

\section{Semi-simplicial spaces and resolutions}\label{sec:SimplicialSpaces}

Let $\Delta^{op}$ denote the simplicial category, that is, the opposite of the category $\Delta$ having objects the finite ordered sets $[n] = \{0,1,...,n\}$ and morphisms the weakly monotone maps. A \textit{simplicial object} in a category $\mathcal{C}$ is a functor $X_\bullet : \Delta^{op} \to \mathcal{C}$. Let $\Lambda \subset \Delta$ be the subcategory having all objects but only the strictly monotone maps. Call $\Lambda^{op}$ the \textit{semi-simplicial category} and a functor $X_\bullet : \Lambda^{op} \to \mathcal{C}$ a \textit{semi-simplicial object} in $\mathcal{C}$. A (semi-)simplicial map $f: X_\bullet \to Y_\bullet$ is a natural transformation of functors: in particular, it has components $f_n :X_n \to Y_n$.

The geometric realisation of a semi-simplicial space $X_\bullet$ is
$$\|X_\bullet\| = \coprod_{n \geq 0} X_n \times \Delta^n / \sim$$
where the equivalence relation is $(d_i(x), y) \sim (x, d^i(y))$, for $d^i : \Delta^n \to \Delta^{n+1}$ the inclusion of the $i$-th face. Note that there is a homeomorphism
$$\|X_\bullet\| \cong \hocolim_{\Lambda^{op}} X_\bullet$$
where the homotopy colimit is taken in the category of unpointed topological spaces.

If $X_\bullet$ is a semi-simplicial pointed space, its realisation \textit{as a pointed space} is
$$\|X_\bullet\|_* = \bigvee_{n \geq 0} X_n \rtimes \Delta^n / \sim$$
where $d_i(x) \rtimes y \sim x \rtimes d^i(y)$. Recall that the \textit{half smash product} of a space $Y$ and a pointed space $C$ is the pointed space $C \rtimes Y := C \times Y / * \times Y.$ There is again a homeomorphism
$$\|X_\bullet\|_* \cong \hocolim_{\Lambda^{op}} X_\bullet$$
where the homotopy colimit is taken in the category of pointed topological spaces. If $X_\bullet^+$ denotes levelwise addition of a disjoint basepoint, then there is a homeomorphism $\|X_\bullet^+\|_* \cong \|X_\bullet\|_+$.

\vspace{2ex}

The skeletal filtration of $\|X_\bullet\|$ gives a strongly convergent first quadrant spectral sequence
\begin{equation}\tag{sSS}\label{SSRestrictedSimplicialSpace}
    E^1_{s,t} = h_t(X_s) \Longrightarrow h_{s+t}(\|X_\bullet\|)
\end{equation}
for any connective generalised homology theory $h_*$. The $d^1$ differential is given by the alternating sum of the face maps, $d^1 = \sum (-1)^i (d_i)_*$. This spectral sequence coincides with the Bousfield--Kan spectral sequence for the homology of a homotopy colimit, and is natural for simplicial maps. There is also a pointed analogue, using reduced homology.

\subsection{Pairs of spaces}

If $f: A \to X$ is a continuous map, we write $(X, A)$ for its mapping cone if the map $f$ is clear.

\subsection{Relative semi-simplicial spaces}

Let $f_\bullet: X_\bullet \to Y_\bullet$ be a map of semi-simplicial spaces. Then the levelwise homotopy cofibres form a semi-simplicial pointed space $C_{f_\bullet}$, and
$$\|C_{f_\bullet}\|_* \cong C_{\|f_\bullet\|}$$
as homotopy colimits commute. In particular, the spectral sequence (\ref{SSRestrictedSimplicialSpace}) for this semi-simplicial pointed space is
\begin{equation}\tag{RsSS}\label{SSRelativeRestrictedSimplicialSpace}
    E^1_{s,t} = \tilde{h}_{t}(C_{f_s}) \cong h_t(Y_s, X_s) \Longrightarrow \tilde{h}_{s+t}(C_{\|f_\bullet\|}) \cong h_{s+t}(\|Y_\bullet\|, \|X_\bullet\|).
\end{equation}

\subsection{Augmented semi-simplicial spaces}

An \textit{augmentation} of a (semi-)simplicial space $X_\bullet$ is a space $X_{-1}$ and a map $\epsilon : X_0 \to X_{-1}$ such that $\epsilon d_0 = \epsilon d_1 : X_1 \to X_{-1}$. An augmentation induces a map $\|\epsilon\| : \|X_\bullet\| \to X_{-1}$. In this case there is a spectral sequence defined for $s \geq -1$,
\begin{equation}\tag{AsSS}\label{SSAugmentedRestrictedSimplicialSpace}
E^1_{s,t} = h_t(X_s) \Longrightarrow h_{s+t+1}(C_{\|\epsilon\|}) \cong h_{s+t+1}(X_{-1}, \|X_\bullet\|).
\end{equation}
for any connective generalised homology theory $h_*$. The $d^1$ differentials are as above for $s>0$, and $d^1 : E^1_{0,t} \to E^1_{-1, t}$ is given by $\epsilon_*$.

There is also a relative version of this construction. Let $f: (\epsilon_X : X_\bullet \to X_{-1}) \to (\epsilon_Y : Y_\bullet \to Y_{-1})$ be a map of augmented semi-simplicial spaces. There is a spectral sequence defined for $s \geq -1$,
\begin{equation}\tag{RAsSS}\label{SSRelativeAugmentedRestrictedSimplicialSpace}
E^1_{s,t} = h_t(X_s, Y_s) \Longrightarrow h_{s+t+1}(C_{\|\epsilon_X\|}, C_{\|\epsilon_Y\|}).
\end{equation}

\subsection{Resolutions}

For our purposes, a \textit{resolution} of a space $X$ is an augmented semi-simplicial space $X_\bullet \to X$ such that the map $\|X_\bullet\| \to X$ is a weak homotopy equivalence. An \textit{$n$-resolution} of a space $X$ is an augmented semi-simplicial space $X_\bullet \to X$ such that the map $\|X_\bullet\| \to X$ is $n$-connected.

\section{A construction in semi-simplicial spaces}\label{sec:Construction}

We will describe a simple construction in semi-simplicial spaces, and show how to determine the connectivity of its realisation in certain cases. Though this construction seems very natural, we are not aware of it being discussed in the literature. We refer to \S\ref{sec:SimplicialSpaces} for notation.

Let $X_\bullet$ be a semi-simplicial space, and define a semi-simplicial pointed space $X_{\bullet-1}^+ \rtimes [\bullet]$ having $X_{n-1}^+ \rtimes \{0,...,n\}$ as its space of $n$-simplices, with face maps given by the pointed maps
\begin{equation}
\tilde{d}_i(x \rtimes j) = \begin{cases}
d_i(x) \rtimes (j-1) & \text{if $i<j$,} \\
* & \text{if $i=j$,} \\
d_{i-1}(x) \rtimes j & \text{if $i>j$.}
\end{cases}
\end{equation}

\begin{prop}\label{prop:SimplicialConstruction}
Suppose $X_\bullet$ has the property that the maps
\begin{equation}\tag{$\star$}\label{eq:Resolution}
\|\cdots X_{i+1} \rightrightarrows X_i \| \overset{d_i}\lra X_{i-1}
\end{equation}
are $(k-i)$-connected for all $i$. Then the natural map
$$\Sigma \|X_\bullet\| \lra \|X_{\bullet-1}^+ \rtimes [\bullet]\|_*$$
is $k$-connected. In particular, if the maps (\ref{eq:Resolution}) are equivalences for all $i$ then there is an equivalence $\Sigma \|X_\bullet\| \simeq \|X_{\bullet-1}^+ \rtimes [\bullet]\|_*$.
\end{prop}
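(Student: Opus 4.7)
The approach is to filter $Y_\bullet$ by sub-semi-simplicial pointed spaces and run a filtration spectral sequence argument. Since $Y_n = X_{n-1}^+ \rtimes \{0,\ldots,n\}$ is a wedge of $n+1$ copies of $X_{n-1}^+$ indexed by $j \in \{0,\ldots,n\}$, I let $F^p Y_n \subseteq Y_n$ be the sub-wedge of those summands with $j \leq p$. The formula for $\tilde{d}_i$ sends the $j$-th summand into the $(j-1)$-th, to the basepoint, or into the $j$-th, depending on whether $i<j$, $i=j$, or $i>j$, so each $F^p Y_\bullet \subseteq Y_\bullet$ is a sub-semi-simplicial pointed space with $F^p Y_n = Y_n$ once $p \geq n$. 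The bottom piece $F^0 Y_\bullet$ has $F^0 Y_0 = *$ and $F^0 Y_n = X_{n-1}^+$ for $n \geq 1$, with $\tilde{d}_0 = *$ and $\tilde{d}_i = (d_{i-1})^+$ for $i \geq 1$; this ``shifted cone'' construction has realisation naturally equivalent to $\Sigma\|X_\bullet\|$, providing the map of the proposition as the composite $\Sigma\|X_\bullet\| \simeq \|F^0 Y_\bullet\|_* \hookrightarrow \|Y_\bullet\|_*$.

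For $p \geq 1$ the filtration quotient $Q^p_\bullet := F^p Y_\bullet / F^{p-1} Y_\bullet$ has $Q^p_n = *$ for $n < p$ and $Q^p_n = X_{n-1}^+$ for $n \geq p$, with face maps $\tilde{d}_i = *$ for $i \leq p$ and $\tilde{d}_i = (d_{i-1})^+$ for $i > p$. The skeletal spectral sequence (\ref{SSRestrictedSimplicialSpace}) of $\|Q^p_\bullet\|_*$ vanishes in columns $s < p$, equals $h_t(X_{s-1})$ in column $s \geq p$, and after the reindexing $m = s - p - 1$ its $d^1$-differentials match (up to a global sign $(-1)^{p+1}$) those of the augmented spectral sequence (\ref{SSAugmentedRestrictedSimplicialSpace}) for the augmented semi-simplicial space $X^{\geq p}_\bullet \to X_{p-1}$, where $X^{\geq p}_m := X_{p+m}$ carries the last $m+1$ face maps of $X_{p+m}$ and is augmented by $d_p$. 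The hypothesis (\ref{eq:Resolution}) is precisely that this augmentation is $(k-p)$-connected, so its mapping cone has vanishing reduced homology through degree $k-p$; the spectral sequence comparison together with the shift in filtration degree then forces $\tilde{h}_n(\|Q^p_\bullet\|_*) = 0$ for $n \leq k$ in every connective generalised homology theory, so $\|Q^p_\bullet\|_*$ is $k$-connected.

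To finish, the filtration $\|F^0 Y_\bullet\|_* \subseteq \|F^1 Y_\bullet\|_* \subseteq \cdots \subseteq \|Y_\bullet\|_*$ of realisations has successive cofibres $\|Q^p_\bullet\|_*$ (realisation commutes with the relevant pushouts), and its associated spectral sequence computing the cofibre of $\|F^0 Y_\bullet\|_* \hookrightarrow \|Y_\bullet\|_*$ has $E^1_{p,q} = \tilde{h}_{p+q}(\|Q^p_\bullet\|_*)$, vanishing in total degree $\leq k$ by the previous step. Hence that cofibre is $k$-connected, the inclusion is a $k$-equivalence, and composing with the identification of the first paragraph gives the result. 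The main obstacle is the middle step, where I must carefully match the two $E^1$-pages tracking the shift by $p+1$ in filtration degree and the global sign $(-1)^{p+1}$, and then argue that vanishing of the abutment of (\ref{SSAugmentedRestrictedSimplicialSpace}) in a range translates into vanishing of the shifted $E^\infty$, and so of $\tilde h_*(\|Q^p_\bullet\|_*)$, in the desired range. A subsidiary technicality is the identification $\|F^0 Y_\bullet\|_* \simeq \Sigma \|X_\bullet\|$, which needs an honest geometric construction (not merely an $E^1$-level isomorphism) so that the composite $\Sigma\|X_\bullet\| \to \|Y_\bullet\|_*$ is indeed the natural map of the proposition.
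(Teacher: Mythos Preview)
Your filtration and overall strategy are exactly the paper's. The paper also filters by $F^i = \{x \rtimes j : j \leq i\}$, identifies $\|F^0\|_* \simeq \Sigma\|X_\bullet\|$ ``by inspection'', and shows each filtration quotient is $k$-connected, so that the inclusion of $F^0$ is a $k$-equivalence.

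The one substantive difference is in how you treat the quotients $Q^p_\bullet$. You compare the skeletal spectral sequence of $\|Q^p_\bullet\|_*$ with the augmented spectral sequence for $X^{\geq p}_\bullet \to X_{p-1}$ at the $E^1$-level, and you correctly flag the resulting obstacle: matching $E^1$-pages and $d^1$-differentials does \emph{not} by itself force the higher pages or the abutments to agree, so as written this step has a gap. The paper sidesteps the issue with a direct geometric identification
\[
\|Q^p_\bullet\|_* \;\cong\; \Sigma^p \, \hocofib\bigl(\|\cdots X_{p+1}^+ \rightrightarrows X_p^+\|_* \xrightarrow{\,d_p\,} X_{p-1}^+\bigr),
\]
which immediately gives $k$-connectivity from hypothesis~($\star$). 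This identification is precisely what your spectral sequence comparison is groping towards: once you have it, the skeletal filtration on $\|Q^p_\bullet\|_*$ literally becomes $\Sigma^p$ of the augmented skeletal filtration on the cofibre, and your $E^1$-matching upgrades to an isomorphism of filtered objects. Proving it amounts to observing that in the piece $X_{n-1}^+ \rtimes \Delta^n$ the collapsed face maps $\tilde d_0,\ldots,\tilde d_p$ crush the faces of $\Delta^n$ opposite the first $p+1$ vertices, yielding a $\Sigma^p$ factor, while the surviving face maps $\tilde d_{p+1},\ldots,\tilde d_n$ assemble exactly the augmented realisation. So the cleanest fix for your ``main obstacle'' is to replace the $E^1$-level comparison by this geometric identification; after that, your argument and the paper's coincide.
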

\begin{proof}
Note that $\hocofib(\|\cdots X^+_{i+1} \rightrightarrows X^+_i \|_* \overset{d_i}\to X^+_{i-1}) \cong \hocofib(\|\cdots X_{i+1} \rightrightarrows X_i \|_+ \overset{d_i}\to X^+_{i-1}) \simeq \hocofib(\|\cdots X_{i+1} \rightrightarrows X_i \| \overset{d_i}\to X_{i-1})$ which is $(k-i)$-connected by hypothesis.

The semi-simplicial space $X_{\bullet-1}^+ \rtimes [\bullet]$ has a filtration by $F^i (X_{n-1}^+ \rtimes [n]) = \{x \rtimes j \, | \, j \leq i\}$. The inclusion $F^{i-1}(X_{n-1}^+ \rtimes [n]) \to F^{i}(X_{n-1}^+ \rtimes [n])$ is the inclusion of a collection of path components, so a cofibration, and the filtration quotient $F^i (X_{\bullet-1}^+ \rtimes [\bullet]) / F^{i-1} (X_{\bullet-1}^+ \rtimes [\bullet])$ is the semi-simplicial pointed space
\begin{figure}[h]
\centering
\includegraphics[bb = 165 673 485 720, scale=1]{./figures/simplicialobj}
\end{figure}

\noindent where the grey arrows denote constant maps to the basepoint. The realisation of this semi-simplicial pointed space is
\begin{equation*}
\|F^i (X_{\bullet-1}^+ \rtimes [\bullet]) / F^{i-1} (X_{\bullet-1}^+ \rtimes [\bullet]) \|_* \cong \Sigma^i \hocofib(\|\cdots X^+_{i+1} \rightrightarrows X^+_i \|_* \overset{d_i}\to X^+_{i-1})
\end{equation*}
which is $k$-connected by the remark above. Furthermore, $F^0 \|X_{\bullet-1}^+ \rtimes [\bullet]\|_* \simeq \Sigma \|X_\bullet\|$ by inspection, so the inclusion of the filtration zero part $\Sigma \|X_\bullet\| \to \|X_{\bullet-1}^+ \rtimes [\bullet]\|_*$ is $k$-connected.
\end{proof}

Let us now give our application of this proposition. Let $C$ be a set, and let the semi-simplicial set $F(C)^\bullet$ have $i$-simplices the set of ordered subsets of $C$ of cardinality $(i+1)$. The $j$-th face map is given by forgetting the $j$-th element of the subset.

\begin{prop}\label{prop:ConnectivityCx}
The space $\|F(C)^\bullet\|$ is homotopy equivalent to a wedge of $(\vert C \vert-1)$-spheres.
\end{prop}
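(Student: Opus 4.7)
The plan is to induct on $|C|$, combining Proposition~\ref{prop:SimplicialConstruction} with an explicit cone nullhomotopy. The base case $|C|=1$ is immediate (the realization is a single point, which we regard as an empty wedge of $0$-spheres). For the inductive step, assume the result holds for all sets of cardinality less than $|C|$. Fix $* \in C$ and set $C_0 := C \setminus \{*\}$. I first observe a canonical isomorphism of pointed semi-simplicial sets
\[
  F(C)^\bullet / F(C_0)^\bullet \;\cong\; F(C_0)^{\bullet-1,+} \rtimes [\bullet],
\]
sending $(x, j)$ with $x \in F(C_0)^{n-1}$ and $j \in [n]$ to the ordered $(n+1)$-subset of $C$ obtained by inserting $*$ at position $j$ of $x$; the face map formulas match, with $i=j$ corresponding to the face that removes $*$ (which lies in $F(C_0)^\bullet$ and is therefore trivial in the quotient). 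Passing to realizations gives $\|F(C_0)^{\bullet-1,+} \rtimes [\bullet]\|_* \simeq \|F(C)^\bullet\|/\|F(C_0)^\bullet\|$.

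To apply Proposition~\ref{prop:SimplicialConstruction} to $X_\bullet := F(C_0)^\bullet$, I would identify the maps~(\ref{eq:Resolution}): iterating the appropriate face map, for $i \ge 1$ this is the ``take the first $i$ entries'' projection, which decomposes over the discrete base $F(C_0)^{i-1}$ as the disjoint union over ordered $i$-subsets $x$ of the maps $\|F(C_0 \setminus x)^\bullet\| \to \{x\}$. By the inductive hypothesis each fiber is a wedge of $(|C|-i-2)$-spheres, hence $(|C|-i-3)$-connected, so (\ref{eq:Resolution}) is $(|C|-i-2)$-connected; the extreme case $i=|C|-1$, where every fiber is empty, is still consistent with the required bound $k-i=-1$. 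Proposition~\ref{prop:SimplicialConstruction} with $k = |C|-2$ then yields that
\[
  \Sigma\|F(C_0)^\bullet\| \;\lra\; \|F(C)^\bullet\|/\|F(C_0)^\bullet\|
\]
is $(|C|-2)$-connected. Since $\Sigma\|F(C_0)^\bullet\| \simeq \bigvee S^{|C|-1}$ by induction, the quotient is $(|C|-2)$-connected; combined with its CW dimension being $|C|-1$, the quotient must itself be a wedge of $(|C|-1)$-spheres.

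To finish, I would exhibit the inclusion $\|F(C_0)^\bullet\| \hookrightarrow \|F(C)^\bullet\|$ as nullhomotopic via a cone construction: on the geometric simplex attached to $(c_0, \ldots, c_n) \in F(C_0)^n$, slide linearly into the adjacent cone simplex $(*, c_0, \ldots, c_n) \in F(C)^{n+1}$ toward the apex vertex $(*)\in F(C)^0$, with face compatibility following from the identity $d_{i+1}(*, c_0, \ldots, c_n) = (*, d_i(c_0, \ldots, c_n))$. A nullhomotopic cofibration $A \hookrightarrow X$ satisfies $X/A \simeq X \vee \Sigma A$, so
\[
  \bigvee S^{|C|-1} \;\simeq\; \|F(C)^\bullet\|/\|F(C_0)^\bullet\| \;\simeq\; \|F(C)^\bullet\| \vee \Sigma\|F(C_0)^\bullet\| \;\simeq\; \|F(C)^\bullet\| \vee \bigvee S^{|C|-1},
\]
exhibiting $\|F(C)^\bullet\|$ as a retract of a wedge of $(|C|-1)$-spheres. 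For $|C| \ge 3$, such a retract is simply connected and, having homology concentrated in degree $|C|-1$, is itself a wedge of $(|C|-1)$-spheres; the case $|C|=2$ gives $\|F(C)^\bullet\| \simeq S^1$ by direct inspection. The main obstacle is the middle step: correctly setting up the shifted semi-simplicial space in Proposition~\ref{prop:SimplicialConstruction}, identifying its fibers with the smaller injective word complexes $F(C_0 \setminus x)^\bullet$, and checking the connectivity estimate uniformly for all $i \ge 1$ including the endpoint where the fiber becomes empty.
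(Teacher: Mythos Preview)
Your proof is correct and follows essentially the same route as the paper: both argue by induction on $|C|$, identify the levelwise cofibre of $F(C_0)^\bullet \hookrightarrow F(C)^\bullet$ with the construction $F(C_0)^{\bullet-1,+}\rtimes[\bullet]$, feed the inductive connectivity of the fibres $\|F(C_0\setminus x)^\bullet\|$ into Proposition~\ref{prop:SimplicialConstruction} with $k=|C|-2$, and use the nullhomotopy of the inclusion (coning off via the extra point) to finish.

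The only difference is cosmetic, in the final deduction. The paper stops once it knows the cofibre is $(|C|-2)$-connected: since the inclusion is null, the splitting $\|F(C)^\bullet\|/\|F(C_0)^\bullet\|\simeq \|F(C)^\bullet\|\vee\Sigma\|F(C_0)^\bullet\|$ immediately gives that $\|F(C)^\bullet\|$ is $(|C|-2)$-connected, and then ``$(|C|-2)$-connected of CW-dimension $|C|-1$'' yields a wedge of $(|C|-1)$-spheres in one stroke. You instead first upgrade the cofibre to a wedge of spheres, then extract $\|F(C)^\bullet\|$ as a retract and argue separately about its homotopy type, with a case split at $|C|=2$. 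This is correct but longer than necessary; the paper's shortcut avoids the retract step and the low-dimensional case analysis.
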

\begin{proof}
Certainly $\|F(C)^\bullet\|$ has dimension $(\vert C \vert-1)$, so it is enough to show it is $(\vert C \vert-2)$-connected. We prove this by induction on $\vert C \vert$, and note that it is clear for $\vert C \vert < 2$. Choose a $p \in C$ and consider the inclusion $F(C - p)^\bullet \to F(C)^\bullet$. Note that $\|F(C - p)^\bullet\| \to \|F(C)^\bullet\|$ is nullhomotopic, as it extends over the join with the point $p$, and furthermore $\|F(C - p)^\bullet\| \simeq \vee S^{\vert C \vert-2}$ by inductive assumption.

The levelwise homotopy cofibre of $F(C - p)^\bullet \to F(C)^\bullet$ is $\vee_{j=0}^\bullet F(C-p)^{\bullet-1}_+$ which can be alternatively written as $F(C-p)^{\bullet-1}_+ \rtimes \{0,...,\bullet\}$, which is the construction of this section applied to $F(C-p)^\bullet$. Consider the augmented semi-simplicial space
$$\|\cdots F(C-p)^{i+1} \rightrightarrows F(C-p)^i \| \overset{d_i} \lra F(C-p)^{i-1}.$$
This has fibre $\|F(C-\{p, p_0, ..., p_{i-1}\})^\bullet\|$ over $\{p_0, ..., p_{i-1}\} \in F(C-p)^{i-1}$, which by inductive hypothesis is a wedge of $(\vert C \vert-i-2)$-spheres, so the map $d_i$ is $(\vert C \vert-i-2)$-connected. By Proposition \ref{prop:SimplicialConstruction} the map
$$\vee S^{\vert C \vert-1} \simeq \Sigma \|F(C-p)^\bullet\| \to \|F(C-p)^{\bullet-1}_+ \rtimes \{0,...,\bullet\}\|_*$$
is then $(\vert C \vert-2)$-connected, so $\|F(C-p)^{\bullet-1}_+ \rtimes \{0,...,\bullet\}\|_*$ itself is $(\vert C \vert-2)$-connected. Thus it follows that $\|F(C)^\bullet\|$ is $(\vert C \vert-2)$-connected.
\end{proof}

The method we have described for identifying the connectivity of $\|F(C)^\bullet\|$ is by no means the only way to prove this: another approach is to study the connectivity of the map $\|F(C)^\bullet\| \to \|\bar{F}(C)^\bullet\|$, where $\bar{F}(C)^\bullet$ is the semi-simplicial set with $p$-simplices the set of all $(p+1)$-tuples of elements of $C$. The connectivity of this map can be effectively analysed by studying the link in $\|\bar{F}(C)^\bullet\|$ of ``maximally bad" simplices.

Other proofs of the connectivity of this complex that may be found in the literature under the name of the ``complex of injective words", by Farmer \cite{Farmer}, Bj{\"o}rner and Wachs \cite{BjornerWachs}, and more recently Kerz \cite{Kerz}.

\section{Stabilisation maps}\label{sec:StabMaps}

Let $(X, x_0)$ be a based space. Let $M$ be the interior of a manifold with boundary $\bar{M}$. Let $\bR^d_+$ denote the $d$-dimensional half space, and choose once and for all an embedding $e : \bR^d_+ \to \bR^d_+$ which:
\begin{enumerate}[(i)]
\item is the identity outside of the unit ball,
\item is isotopic to the identity,
\item sends the interior into the interior, and sends the origin into the interior.
\end{enumerate}
For $\mathcal{E} \in \partial \bar{M}$ let $e_\mathcal{E} :  \bar{M} \hookrightarrow \bar{M}$ be a self-embedding which is obtained by identifying some open neighbourhood $U$ of $\mathcal{E}$ with $\bR^d_+$, using $e$ inside this neighbourhood and extending by the identity. Given $C \in C_n(M;X)$ we define
$$s_\mathcal{E}(C) := e_\mathcal{E}(C \cup \{\mathcal{E}\}) \subset C_{n+1}(M;X)$$
where the new point is labeled by $x_0 \in X$. This defines a continuous map
$$s_\mathcal{E} : C_n(M;X) \lra C_{n+1}(M;X).$$
As $e$ is isotopic to the identity, the homotopy class of this map depends only on the path component of $\mathcal{E} \in \partial \bar{M}$.

\section{Symmetric groups}\label{sec:SymmetricGroups}

To warm-up to the proof of Theorem \ref{thm:Main}, we will start with the easier situation where $M=\bR^{\infty}$ and $X=*$. In this case we have $C_n(M;X) := \Emb(\{1,...,n\}, \bR^\infty)/\Sigma_n$, and as $\Emb(\{1,...,n\}, \bR^\infty)$ is a contractible free $\Sigma_n$-space, this is a model for $B\Sigma_n$.

The manifold $\bR^\infty$ is homeomorphic to the interior of the closed $\infty$-disc, so up to homotopy there is a single stabilisation map $s: B\Sigma_{n} \to B\Sigma_{n+1}$, given by adding a point near infinity.

\begin{thm}[Nakaoka \cite{NakaokaDecomposition}]\label{thm:StabSymmetricGroups}
The map $H_*(B\Sigma_{n};\bZ) \to H_*(B\Sigma_{n+1};\bZ)$ is an isomorphism in degrees $2* \leq n$.
\end{thm}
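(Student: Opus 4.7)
The plan is to construct an augmented semi-simplicial resolution of $B\Sigma_n$ whose $p$-th level is a model for $B\Sigma_{n-p-1}$, and then to run the associated spectral sequence inductively on $n$. Using the model $B\Sigma_n = \widetilde{C}_n(\bR^\infty)/\Sigma_n$, I would define the semi-simplicial space $X_\bullet(n)$ by
\[
X_p(n) := \widetilde{C}_n(\bR^\infty)/\Sigma_{n-p-1},
\]
thought of as the space of pairs (configuration of $n$ points in $\bR^\infty$, ordering of $p+1$ of them), with face map $d_j$ forgetting the $j$-th ordered point, and augment by $X_{-1}(n) = B\Sigma_n$ via the evident forgetful map. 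Since $\widetilde{C}_n(\bR^\infty)$ is a contractible free $\Sigma_n$-space, $X_p(n)$ is a classifying space for $\Sigma_{n-p-1}$, and since any two embeddings $\{1,\ldots,k\} \hookrightarrow \bR^\infty$ are isotopic, every face map $d_j$ is homotopic to one and the same stabilisation map $s : B\Sigma_{n-p-1} \to B\Sigma_{n-p}$.

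The fibre of the augmentation over an unordered configuration $C$ is exactly $\|F(C)^\bullet\|$, the complex of injective words on $C$, which is $(n-2)$-connected by Proposition \ref{prop:ConnectivityCx}; hence $\|X_\bullet(n)\| \to B\Sigma_n$ is $(n-1)$-connected. The spectral sequence (\ref{SSAugmentedRestrictedSimplicialSpace}) then takes the form
\[
E^1_{s,t} = H_t(B\Sigma_{n-s-1};\bZ) \Longrightarrow H_{s+t+1}(B\Sigma_n, \|X_\bullet(n)\|;\bZ),
\]
with abutment vanishing for $s+t \leq n-2$. Because every face map is homotopic to the stabilisation map, $d^1 = \sum_{j=0}^{s}(-1)^j (d_j)_*$ equals $s_*$ for $s$ even and $0$ for $s$ odd. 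In particular, $E^2_{-1,t}$ and $E^2_{0,t}$ are respectively $\Coker(s_*)$ and $\Ker(s_*)$ of the map $H_t(B\Sigma_{n-1}) \to H_t(B\Sigma_n)$ we wish to understand, while for $s \geq 1$ the group $E^2_{s,t}$ is a kernel or cokernel of a stabilisation map between strictly smaller symmetric groups.

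I would then induct on $n$, the cases $n \leq 2$ being immediate by direct inspection. Assuming the stability range for all $m < n$, the inductive hypothesis forces $E^2_{s,t} = 0$ whenever $1 \leq s \leq n - 2t - 1$. A short range check then shows that, for $2t \leq n-1$, every possible source $E^r_{r-1,t-r+1}$ of a higher differential landing in $E^r_{-1,t}$ and every source $E^r_{r,t-r+1}$ landing in $E^r_{0,t}$ lies in the inductive vanishing region for all $r \geq 2$, and no higher differentials can leave these two columns; hence $E^\infty_{-1,t} = E^2_{-1,t}$ and $E^\infty_{0,t} = E^2_{0,t}$. Convergence then forces both to vanish, giving surjectivity and injectivity of $s_*$ in the claimed range. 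The main obstacle is the final bookkeeping: one must verify that the inductive vanishing on the $E^2$ page and the convergence range on the $E^\infty$ page mesh tightly enough that the induction closes precisely at the optimal slope $2* \leq n$ without a rogue higher differential opening a gap.
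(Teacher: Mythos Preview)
Your argument is correct and follows essentially the same route as the paper: the same semi-simplicial resolution by configurations with ordered distinguished points, the same identification of the fibre with the complex of injective words, the same observation that all face maps are homotopic so that $d^1$ alternates between $s_*$ and zero, and the same inductive spectral-sequence chase. The only cosmetic difference is that you run the augmented spectral sequence (\ref{SSAugmentedRestrictedSimplicialSpace}) for the resolution of $B\Sigma_n$, reading off the kernel and cokernel of $s_*$ in columns $s=0$ and $s=-1$, whereas the paper runs the unaugmented spectral sequence (\ref{SSRestrictedSimplicialSpace}) for $B\Sigma_{n+1}^\bullet$ and reads off the edge homomorphism; your stated $E^2$-vanishing range ``$1\le s\le n-2t-1$'' is off by one for odd $s$, but the differentials actually hitting columns $-1$ and $0$ still originate in the vanishing region, so the induction closes as you claim.
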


\subsection{Resolution}

Let $E := \{(C, p) \in B\Sigma_n \times \bR^\infty \, | \, p \in C\}$. Forgetting the point $p$ defines a covering space
$$\pi : E \lra B\Sigma_n.$$
Let $B\Sigma_n^i \subset E \times_{B\Sigma_n} E \times_{B\Sigma_n} \cdots \times_{B\Sigma_n} E$ consist of those tuples $(C, p_0, ..., p_i)$ with the $p_j$ all distinct. There are maps $d_j : B\Sigma_n^i \to B\Sigma_n^{i-1}$ for $j=0,...,i$ given by forgetting the $j$-th point. Furthermore, there is a unique map $B\Sigma_n^i \to B\Sigma_n$ given by forgetting all points.

\begin{prop}\label{prop:ResolutionSymmetricGroups}
$B\Sigma_n^\bullet \to B\Sigma_n$ is an augmented semi-simplicial space. The fibre of $\|B\Sigma_n^\bullet\| \to B\Sigma_n$ over a configuration $C \in B\Sigma_n$ is homotopy equivalent to a wedge of $(n-1)$-spheres. In particular this gives a $(n-1)$-resolution of $B\Sigma_n$.
\end{prop}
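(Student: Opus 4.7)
The plan is to check that $B\Sigma_n^\bullet$ really is an augmented semi-simplicial space over $B\Sigma_n$, identify the fibre of the augmentation $\|B\Sigma_n^\bullet\| \to B\Sigma_n$ over a configuration $C$ with the complex $\|F(C)^\bullet\|$ of Proposition \ref{prop:ConnectivityCx}, and finally use the long exact sequence of a fibration to turn the connectivity of that fibre into the desired connectivity of the augmentation map.

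The semi-simplicial identities for $B\Sigma_n^\bullet$ reduce to the evident compatibility among the face maps $d_j$ that forget the $j$-th entry of the ordered tuple $(p_0,\ldots,p_i)$, which I regard as routine, together with $\epsilon d_0 = \epsilon d_1$ for the augmentation. More substantively, each projection $B\Sigma_n^i \to B\Sigma_n$ is a finite covering: over any open set $U \subset B\Sigma_n$ small enough that the $n$ points of every $C' \in U$ can be canonically tracked---take $U$ of the form $(V_1 \times \cdots \times V_n)/\Sigma_n$ for disjoint small balls around the points of a chosen $C$---there is a canonical identification $B\Sigma_n^i|_U \cong U \times F(C)^i$, and these identifications commute with the face maps. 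Passing to geometric realisation, $\|B\Sigma_n^\bullet\| \to B\Sigma_n$ is therefore a genuine fibre bundle with fibre $\|F(C)^\bullet\|$.

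By Proposition \ref{prop:ConnectivityCx} this fibre is homotopy equivalent to a wedge of $(n-1)$-spheres, and in particular is $(n-2)$-connected. The long exact sequence of homotopy groups of the fibration then yields that $\|B\Sigma_n^\bullet\| \to B\Sigma_n$ is $(n-1)$-connected, which is precisely what it means to be an $(n-1)$-resolution. The only step that is not completely mechanical is the construction of local trivialisations of the covers $B\Sigma_n^i \to B\Sigma_n$, but this is immediate from the fact that a small enough neighbourhood of a configuration $C$ in $B\Sigma_n$ consists of configurations with exactly one point in each of a collection of disjoint small balls centered at the points of $C$, so the $n$ points of nearby configurations are canonically labelled by the points of $C$.
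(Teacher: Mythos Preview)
Your proof is correct and follows essentially the same approach as the paper: verify the simplicial identities, identify the fibre over $C$ with $\|F(C)^\bullet\|$, and invoke Proposition~\ref{prop:ConnectivityCx}. You supply more detail than the paper does---in particular the explicit local trivialisation showing that $\|B\Sigma_n^\bullet\| \to B\Sigma_n$ is a genuine fibre bundle, and the appeal to the long exact sequence to pass from $(n-2)$-connectivity of the fibre to $(n-1)$-connectivity of the map---but the underlying argument is the same.
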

\begin{proof}
That $B\Sigma_n^\bullet \to B\Sigma_n$ is an augmented semi-simplicial space is clear: all that is required is to observe that the maps $d_j$ respect the simplicial identities. The fibre over a configuration $C$ is nothing but the realisation of the semi-simplicial space $F(C)^\bullet$, which we have shown in Proposition \ref{prop:ConnectivityCx} to be a wedge of $(n-1)$-spheres.
\end{proof}

\begin{prop}
There are homotopy equivalences $B\Sigma_n^i \simeq B\Sigma_{n-i-1}$. Under this identification the face maps $d_j : B\Sigma_{n}^i \to B\Sigma_{n}^{i-1}$ and the augmentation map $B\Sigma_n^0 \to B\Sigma_n$ are all homotopic to the stabilisation map.
\end{prop}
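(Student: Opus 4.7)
The plan is to construct, for each $i$, a weak equivalence $\phi_i : B\Sigma_n^i \to B\Sigma_{n-i-1}$ given concretely by forgetting the ordered points: send $(C,p_0,\ldots,p_i)$ to the unordered configuration $C \setminus \{p_0,\ldots,p_i\}$ of size $n-i-1$. First I would observe that $\phi_i$ is a fibre bundle whose fibre over $C' \in B\Sigma_{n-i-1}$ is the ordered configuration space $\Emb(\{0,1,\ldots,i\}, \bR^\infty \setminus C')$. The Fadell--Neuwirth fibration
$$\Emb(\{0,\ldots,i\}, \bR^\infty \setminus C') \lra \Emb(\{0,\ldots,i\} \sqcup C', \bR^\infty) \lra \Emb(C', \bR^\infty)$$
has contractible total space and base, as these are ordered configuration spaces in $\bR^\infty$; hence the fibre is weakly contractible and $\phi_i$ is a weak equivalence.

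Next I would compare each face map and the augmentation with the stabilisation map $s$ under these identifications. Directly from the definition,
$$\phi_{i-1} \circ d_j \;:\; (C,p_0,\ldots,p_i) \longmapsto \bigl(C\setminus\{p_0,\ldots,p_i\}\bigr) \cup \{p_j\},$$
so the face map simply ``puts $p_j$ back'' into the underlying unordered configuration produced by $\phi_i$. The stabilisation map $s : B\Sigma_{n-i-1} \to B\Sigma_{n-i}$ of \S\ref{sec:StabMaps} also adjoins a single point to a configuration, but at a fixed location near $\mathcal{E}$. Both are therefore instances of a general ``adjoin a point'' construction whose space of choices---roughly, $\bR^\infty$ minus a finite set---is path-connected, so up to homotopy the two must agree. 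The same argument handles the augmentation $B\Sigma_n^0 \to B\Sigma_n$, which under $\phi_0$ also becomes ``put back $p_0$''.

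The subtlety, and the only real work, is upgrading these pointwise comparisons to a \emph{continuous} homotopy. A clean way to do this is to work in the Borel model: since $E\Sigma_n := \Emb(\{1,\ldots,n\},\bR^\infty)$ is a contractible free $\Sigma_n$-space, we have $B\Sigma_n^i \cong E\Sigma_n \times_{\Sigma_n} (\Sigma_n/\Sigma_{n-i-1}) \cong E\Sigma_n/\Sigma_{n-i-1}$, which is a model for $B\Sigma_{n-i-1}$. Each face map $d_j$ then corresponds to the map $E\Sigma_n/\Sigma_{n-i-1} \to E\Sigma_n/\Sigma_{n-i}$ induced by the inclusion of stabilisers $\Sigma_{n-i-1} \hookrightarrow \Sigma_{n-i}$ (as permutations fixing one more element). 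Any two such subgroup inclusions are conjugate in $\Sigma_{n-i}$, hence induce homotopic maps on classifying spaces; the geometric stabilisation map $s$ of \S\ref{sec:StabMaps} can be realised as such a standard inclusion (by choosing an isotopy of $\bR^\infty$ moving a point into a neighbourhood of $\mathcal{E}$), so all these maps agree with $s$ up to homotopy. The same reasoning identifies the augmentation $B\Sigma_n^0 \to B\Sigma_n$ with $s$.
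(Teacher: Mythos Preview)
Your argument is correct, and it takes a route that differs from the paper's in two respects.

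For the equivalence $B\Sigma_n^i \simeq B\Sigma_{n-i-1}$, the paper fibres in the \emph{other} direction: it projects $B\Sigma_n^i$ onto the ordered configuration of the $i+1$ distinguished points, giving a fibration over $\Emb(\{1,\ldots,i+1\},\bR^\infty)\simeq *$ whose fibre is already a model for $B\Sigma_{n-i-1}$. You instead project onto the unordered configuration of the remaining points and show the fibre is contractible. Both work equally well; yours is perhaps the more natural map to write down, while the paper's makes the identification of the fibre immediate.

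For the face maps, the paper gives a direct geometric argument: it observes that the $d_j$ differ by permutations of the distinguished points, then restricts to a subspace where distinguished and undistinguished points are separated by the first coordinate, obtaining a product decomposition on which the permutation visibly acts only on a contractible factor. Your Borel-model argument---identifying $B\Sigma_n^i$ with $E\Sigma_n/\Sigma_{n-i-1}$ and invoking that conjugate subgroup inclusions induce homotopic maps---is exactly the ``alternative point of view'' the paper sketches in the remark immediately following the proof. It is cleaner conceptually, though one should note that the underlying reason the permutation action on $B\Sigma_n^i$ is homotopically trivial is that on $\pi_1 \cong \Sigma_{n-i-1}$ it is conjugation by elements of $\Sigma_{\{1,\ldots,i+1\}}$, which commute with $\Sigma_{n-i-1}$. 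The paper's geometric argument, by contrast, generalises more readily to the manifold setting of \S\ref{sec:ConfigurationSpaces}, where no group model is available.
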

\begin{proof}
Note that $\Emb(\{1, ..., n-i-1\}, \bR^\infty \setminus \{ 1, ..., i+1\})$ is still contractible, and a free $\Sigma_{n-i-1}$-space. The quotient by $\Sigma_{n-i-1}$ is precisely a fibre of the fibration
$$B\Sigma_n^i \overset{\pi}\lra \Emb(\{1,...,i+1\}, \bR^\infty) \simeq *,$$
where the map $\pi$ discards the undistinguished points. This establishes the homotopy equivalence $B\Sigma_n^i \simeq B\Sigma_{n-i-1}$. The face maps all differ by some permutation of the distinguished points, $\sigma : B\Sigma_n^i \to B\Sigma_n^i$, so to show the face maps are homotopic it is enough to show these permutation maps are homotopic to the identity.

Note $B\Sigma_n^i$ is homotopy equivalent to its subspace $Y$ where the distinguished points have first coordinate 1, and the undistinguished points have negative first coordinate. There is a homeomorphism
$$Y \cong \Emb(\{1, ..., n-i-1\}, (-\infty,0)\times \bR^\infty)/\Sigma_{n-i-1} \times \Emb(\{n-i, ..., n\}, \{1\} \times \bR^\infty)$$
and the automorphism $\sigma$ preserves the subspace $Y$, acts trivially on the first factor and by a permutation on the second. The second factor is contractible, so $\sigma$ is homotopic to the identity on $Y$, and hence also on $B\Sigma_n^i$.
\end{proof}

\begin{rem}
There is an alternative --- though equivalent --- point of view on this resolution. Let us \textit{define} $B\Sigma_n := * \hcoker \Sigma_n$ to be the homotopy quotient. The semi-simplicial set $F(\{1, ..., n\})^\bullet$ has an action of $\Sigma_n$, and we can \textit{define} $B\Sigma_n^i := F(\{1, ..., n\})^i \hcoker \Sigma_n$ to be the homotopy quotient.

Then there is a homotopy equivalence $\|F(\{1, ..., n\})^\bullet\| \hcoker \Sigma_n \simeq \|B\Sigma_n^{\bullet}\|$, so a fibration sequence
$$\|F(\{1, ..., n\})^\bullet\| \lra \|B\Sigma_n^{\bullet}\| \lra B\Sigma_n.$$
It then remains to identify $B\Sigma_n^i$, which may be seen to be $B\Sigma_{n-i-1}$ as $\Sigma_n$ acts transitively on $F(\{1, ..., n\})^i$ with stabiliser $\Sigma_{n-i-1}$.
\end{rem}

\subsection{Proof of Theorem \ref{thm:StabSymmetricGroups}}

Note that Theorem \ref{thm:StabSymmetricGroups} is trivially true for $n=0$, as the spaces $B\Sigma_n$ are all connected. Consider the spectral sequence (\ref{SSRestrictedSimplicialSpace}) in integral homology applied to the semi-simplicial space $B\Sigma_{n+1}^\bullet$. It is
$$E^1_{s,t} = H_t(B\Sigma_{n-s}) \Longrightarrow H_{s+t}(\|B\Sigma^\bullet_{n+1}\|) \cong H_{s+t}(B\Sigma_{n+1}) \quad \text{for} \quad s+t \leq n-1$$
and we wish to deduce something about the range $2* \leq n$, so need $\lfloor\frac{n}{2}\rfloor \leq n-1$, which holds for all $n \geq 1$. In this spectral sequence the $d^1$ differential is given by the alternating sum of the face maps. However, these are all freely homotopic, so induce the same map on homology. Thus $d^1 : E^1_{odd, *} \to E^1_{even, *}$ is zero and $d^1 : E^1_{even, *} \to E^1_{odd, *}$ is given by the stabilisation map,
$$s_* : H_t(B\Sigma_{n-2s}) \lra H_t(B\Sigma_{n-2s+1}).$$
Applying Theorem \ref{thm:StabSymmetricGroups} for $n' < n$ points, we see that this stabilisation map is an isomorphism for $2t \leq n-2s$, so that $E^2_{*,*}$ is trivial for bidegrees $(2s,t)$ with $2(t+2s) \leq n+2s$ and $s>0$, and bidegrees $(2s+1, t)$ with $2(t+2s+1) \leq n+2s+2$ and $s>0$.
\begin{figure}[h]
\centering
\includegraphics[bb = 0 0 139 120]{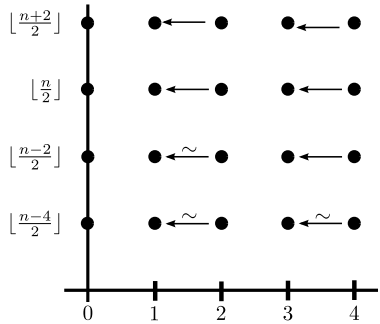}
\caption{$E^1$ page of the spectral sequence converging to $H_*(|B\Sigma_{n+1}^\bullet|)$.}
\label{fig:E1SymmetricGroups}
\end{figure}
Observing Figure \ref{fig:E1SymmetricGroups}, in total degrees $2* \leq n$ the spectral sequence collapses at $E^2$ and is concentrated along $s=0$. Thus Theorem \ref{thm:StabSymmetricGroups} holds.

\section{Configuration spaces}\label{sec:ConfigurationSpaces}


The spectral sequence argument we will use is different to that in the last section, and is closer to that used in \cite{R-WResolution} to prove homological stability for moduli spaces of surfaces with tangential structure.

Let us write $M_k$ for the manifold $M$ with $k$ points removed (as $M$ is assumed to be connected, the diffeomorphism type of $M_k$ is independent of which points are removed). Define spaces
$$C_n(M; X)^i := \{(C, p_0,...,p_i) \in C_n(M ;X) \times M^{i+1} \,\, | \,\, p_j \in C, \, p_j \neq p_k \}.$$
There are maps $d_j : C_n(M; X)^i \to C_n(M; X)^{i-1}$ given by forgetting the $j$-th point, and $C_n(M; X)^i \to C_n(M; X)$ given by forgetting all points.

\begin{prop}\label{prop:ConfigurationSpaceResolution}
$C_n(M ;X)^\bullet \to C_n(M ;X)$ is an augmented semi-simplicial space, and an $(n-1)$-resolution. Furthermore, there are homotopy fibre sequences
$$C_{n-i-1}(M_{i+1} ;X) \lra C_n(M ;X)^i \overset{\pi}\lra \Emb(\{1,...,i+1\}, M) \times X^{i+1} = \widetilde{C}_{i+1}(M;X).$$
\end{prop}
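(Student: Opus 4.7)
The plan is to break the proposition into three pieces: verification of the semi-simplicial and augmentation structure, identification of each $C_n(M;X)^i$ via a homotopy fibre sequence, and the $(n-1)$-connectivity of $\|C_n(M;X)^\bullet\| \to C_n(M;X)$. The semi-simplicial identities for the face maps $d_j$, which forget the $j$-th distinguished point, are immediate; the augmentation relation $\epsilon d_0 = \epsilon d_1$ holds because both compositions simply forget all distinguished points.

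For the fibre sequence I would first observe that the forgetful map $\pi : C_n(M;X)^i \to \widetilde{C}_{i+1}(M;X)$, remembering only the ordered distinguished tuple together with its labels, is a fibre bundle. This is a version of the Fadell--Neuwirth argument: given $(\mathbf{p}, \mathbf{x}) = ((p_0,\dots,p_i),(x_0,\dots,x_i))$ in the base, choose pairwise disjoint open discs $D_j \subset M$ around each $p_j$ and, by ambient isotopy extension, produce local trivialisations of $\pi$ over the neighbourhood of $(\mathbf{p},\mathbf{x})$ consisting of tuples whose $j$-th point lies in $D_j$. Over such a neighbourhood the fibre is homeomorphic to $C_{n-i-1}(M_{i+1}; X)$, and this identification is compatible as the base varies, giving the desired homotopy fibre sequence.

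For the resolution claim I would argue that the homotopy fibre of $\|C_n(M;X)^\bullet\| \to C_n(M;X)$ can be computed level-wise. The map $C_n(M;X)^i \to C_n(M;X)$ is a covering space: it is the open-and-closed subspace of the $(i+1)$-fold fibre product of the tautological $n$-fold cover $E \to C_n(M;X)$ consisting of tuples of pairwise distinct points. Hence its fibre over a fixed configuration $C$ is the discrete set $F(C)^i$ of ordered $(i+1)$-tuples of distinct points from $C$. Since the maps at each level are fibrations with a CW base, the argument sketched in the commented-out level-wise homotopy fibre lemma (via local quasi-fibration criteria) shows that the fibre of $\|C_n(M;X)^\bullet\| \to C_n(M;X)$ over $C$ is $\|F(C)^\bullet\|$. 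By Proposition \ref{prop:ConnectivityCx}, this is a wedge of $(n-1)$-spheres, so $(n-2)$-connected, making the augmentation $(n-1)$-connected.

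The main obstacle is the level-wise homotopy fibre step: although the set-theoretic fibre of $\|C_n(M;X)^\bullet\| \to C_n(M;X)$ is transparently $\|F(C)^\bullet\|$, geometric realisation of semi-simplicial spaces does not automatically commute with homotopy pullback, and one must verify the quasi-fibration property by checking that, over contractible neighbourhoods in $C_n(M;X)$, the inclusion of the fibre is a weak equivalence. Once the coverings $C_n(M;X)^i \to C_n(M;X)$ are in hand this is routine, and the remaining input — the combinatorial connectivity of the complex of injective words — is provided by Proposition \ref{prop:ConnectivityCx}.
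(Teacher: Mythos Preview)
Your proposal is correct and follows essentially the same approach as the paper: identify the fibre of the augmentation over a configuration $C$ as $\|F(C)^\bullet\|$ and invoke Proposition~\ref{prop:ConnectivityCx}, and identify $\pi$ as a Fadell--Neuwirth style fibre bundle. If anything you are more careful than the paper's published proof, which simply asserts that the fibre of $\|C_n(M;X)^\bullet\|\to C_n(M;X)$ is $\|F(C)^\bullet\|$ without justifying the level-wise computation of the homotopy fibre; your discussion of the quasi-fibration verification is exactly the argument the author had in mind (and indeed drafted, then suppressed).
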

\begin{proof}
The first statement requiring proof is that the fibration $\|C_n(M ;X)^\bullet\| \to C_n(M ;X)$ is $(n-1)$-connected. However, its fibre over a labeled configuration $C$ in $M$ is $\| F(C)^\bullet\|$, which we showed in Proposition \ref{prop:ConnectivityCx} to be a wedge of $(n-1)$-spheres.

The map $\pi$ sends a tuple $(C, p_0,...,p_i)$ to $(p_0,...,p_i , \ell(p_0),..., \ell(p_i))$ where $\ell(p_j)$ denotes the label in $X$ of $p_j \in C$. This is a fibration (in fact a fibre bundle) and the fibre over $(p_0,...,p_i, x_0,...,x_i)$ is $C_{n-i-1}(M \setminus \{p_0,...,p_i \} ; X)$.
\end{proof}

Consider the stabilisation map $s_\mathcal{E} : C_{n}(M ;X) \to C_{n+1}(M ;X)$ for $\mathcal{E} \in \partial \bar{M}$. Let us write $R_n(M,\mathcal{E})$ for the pair $(C_{n+1}(M;X), C_n(M;X))$, where $C_n(M;X)$ is thought of as a subspace via the map $s_\mathcal{E}$. If the reader prefers they can consider $R_n(M,\mathcal{E})$ to be the mapping cone of $s_\mathcal{E}$: we will be interested only in its connectivity. The map $s_\mathcal{E}$ induces a simplicial map of resolutions $s_\mathcal{E}^\bullet : C_n(M;X)^\bullet \to C_{n+1}(M;X)^\bullet$ and we write $R_n(M,\mathcal{E})^\bullet$ for the simplicial pair $(C_{n+1}(M;X)^\bullet, C_n(M;X)^\bullet)$. There is then an augmented semi-simplicial object in the category of pairs of spaces
$$R_n(M,\mathcal{E})^\bullet \lra R_n(M,\mathcal{E}),$$
and this is an $n$-resolution.

We will use this resolution to prove the following theorem, which is a slightly weaker result than Theorem \ref{thm:Main}. We will explain how to obtain the full strength of Theorem \ref{thm:Main} in the following two sections.

\begin{thm}\label{thm:ConfSpaceBasic}
The pair $R_n(M, \mathcal{E})$ has trivial homology in degrees $2* \leq n$. Equivalently, $s_\mathcal{E} : C_n(M;X) \to C_{n+1}(M;X)$ is a homology epimorphism in degrees $2* \leq n$ and a homology isomorphism in degrees $2* \leq n-2$.
\end{thm}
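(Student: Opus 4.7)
The plan is to induct on $n$ and feed the augmented relative semi-simplicial pair $R_n(M,\mathcal{E})^\bullet \to R_n(M,\mathcal{E})$ into the spectral sequence (\ref{SSRelativeAugmentedRestrictedSimplicialSpace}). The base cases (small $n$) follow from path-connectedness of $C_n(M;X)$, so I would assume the result for all $n' < n$ (and any admissible manifold). The $(-1)$-column is $E^1_{-1,t} = H_t(R_n(M,\mathcal{E}))$, which is precisely what I want to show vanishes for $2* \leq n$. The abutment is $H_{s+t+1}$ of the relative pair of mapping cones $(C_{\|\epsilon_X\|}, C_{\|\epsilon_Y\|})$; since Proposition \ref{prop:ConfigurationSpaceResolution} gives an $n$-resolution of $C_{n+1}(M;X)$ and an $(n-1)$-resolution of $C_n(M;X)$, these cones are $(n+1)$- and $n$-connected respectively, and the relative homology vanishes in total degree $\leq n+1$.

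To bound the $s \geq 0$ columns $E^1_{s,t} = H_t(C_{n+1}(M;X)^s, C_n(M;X)^s)$, I would use the fibration structure of Proposition \ref{prop:ConfigurationSpaceResolution}: since $s_\mathcal{E}^s$ covers the identity on $\widetilde{C}_{s+1}(M;X)$ and restricts fibrewise to the analogous stabilisation map for the punctured manifold, the relative fibre of the pair of fibrations is $R_{n-s-1}(M_{s+1}, \mathcal{E}')$. The inductive hypothesis then forces $H_q(R_{n-s-1}(M_{s+1}, \mathcal{E}')) = 0$ for $2q \leq n - s - 1$, and running the relative Serre spectral sequence gives
\[ E^1_{s,t} = 0 \quad \text{whenever} \quad s + 2t \leq n - 1, \]
with a one-step sharper bound $s + 2t \leq n$ in the parity case when $n-s$ is odd (where the integrality of $q$ kicks in).

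Combining the pieces is a diagram chase. The incoming differential $d^r: E^r_{r-1, t-r+1} \to E^r_{-1, t}$ has source a subquotient of $E^1_{r-1, t-r+1}$, which vanishes whenever $r \geq 2t - n + 2$. For $2t \leq n - 1$ every such source is killed and the abutment vanishes, forcing $E^1_{-1, t} = 0$ in this range. The main obstacle is the single boundary degree $2t = n$ with $n$ even: here the $r=1$ source $E^1_{0, n/2}$ is not killed by the basic bound, and the Serre spectral sequence identifies it, up to $\pi_1(M \times X)$-coinvariants, with $H_{n/2}(R_{n-1}(M_1, \mathcal{E}'))$, a group not forced to vanish by the induction. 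I would attempt to close this case by directly identifying $d^1$ as the augmentation-induced map on relative homology of the covering $C_\bullet(M;X)^0 \to C_\bullet(M;X)$, and exploiting that this covering admits a transfer (a precursor of the Nakaoka-style argument of \S\ref{sec:Transfer}) compatible with stabilisation, to force $E^1_{-1, n/2} = 0$. Handling this single edge degree is the delicate step; the remainder is a fairly routine two-spectral-sequence induction.
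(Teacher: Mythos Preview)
Your setup is essentially the paper's: induct on $n$, run the relative augmented spectral sequence, and use the fibrewise description of $C_{n+1}(M;X)^s$ together with the inductive hypothesis on $R_{n-s-1}(M_{s+1},\mathcal{E}')$ to get $E^1_{s,t}=0$ for $s+2t\le n-1$. From this one deduces, exactly as you say, that the augmentation $d^1\colon E^1_{0,t}\to E^1_{-1,t}=H_t(R_n(M,\mathcal{E}))$ is surjective for all $2t\le n$, and is already zero for $2t\le n-1$. So the only content left is the single degree $t=n/2$ when $n$ is even. Up to here you match the paper.

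The gap is in how you close that last degree. Your proposal is to invoke the transfer for the finite cover $C_\bullet(M;X)^0\to C_\bullet(M;X)$ ``compatibly with stabilisation''. But these covers have different degrees ($n$ and $n+1$), and pulling back the $(n+1)$-fold cover along $s_\mathcal{E}$ produces an extra sheet corresponding to the newly added point; the resulting transfer identity is $\trf_{n+1}\circ (s_\mathcal{E})_* = (s_\mathcal{E}^0)_*\circ\trf_n + (\text{section})_*$, not a commuting square. So the transfer does not directly force $H_{n/2}(R_n(M,\mathcal{E}))=0$, and you have not said what further input would. This is precisely the step where the real work of the proof lies, and hand-waving it is a genuine omission.

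What the paper does instead is geometric. Having shown that $\epsilon\colon H_*(R_n^0(M,\mathcal{E}))\to H_*(R_n(M,\mathcal{E}))$ is onto for $2*\le n$, it further shows that the fibre inclusion $R_{n-1}(M_1,\mathcal{E})\to R_n^0(M,\mathcal{E})$ is onto in this range (edge of the relative Serre spectral sequence), and then proves a separate ``puncture stability'' statement (Proposition~\ref{prop:Punctures}) saying that $p_{\mathcal{E}'}\colon R_{n-1}(M,\mathcal{E})\to R_{n-1}(M_1,\mathcal{E})$ is a homology equivalence for $2*\le n$; this step itself requires an excision/Mayer--Vietoris argument and the inductive hypothesis. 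The composite
\[
R_{n-1}(M,\mathcal{E})\xrightarrow{p_{\mathcal{E}'}} R_{n-1}(M_1,\mathcal{E})\hookrightarrow R_n^0(M,\mathcal{E})\xrightarrow{\epsilon} R_n(M,\mathcal{E})
\]
is therefore onto for $2*\le n$. But this composite is induced by the commuting square of two stabilisations $s_\mathcal{E}$, $s_{\mathcal{E}'}$ at nearby boundary points; choosing a path $\mathcal{E}\leadsto\mathcal{E}'$ gives a diagonal, and the resulting relative map factors through the self-homotopy class $\tau$ coming from swapping the two new points. One then shows (Lemma~\ref{lem:TauIsZero}) that $\tau$ is zero on homology in degrees $2*\le n$, again using the inductive hypothesis. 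Thus the composite is simultaneously surjective and zero, forcing $H_*(R_n(M,\mathcal{E}))=0$ for $2*\le n$.

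In short: your spectral-sequence framework is right and you correctly located the hard step, but the argument you sketch for it is not a proof. The actual mechanism is the puncture-stability proposition plus the ``swap the two new points'' nullhomotopy, neither of which your proposal mentions.
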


\subsection{Proof of Theorem \ref{thm:ConfSpaceBasic}}
We proceed by induction on $n$. As $M$ is connected and of dimension at least two, all the spaces involved are connected and so the statement is trivially true for $n \leq 1$. For $n \geq 2$, we can apply (\ref{SSRelativeAugmentedRestrictedSimplicialSpace}) to the augmented semi-simplicial pair of spaces $R_n(M,\mathcal{E})^\bullet \lra R_n(M,\mathcal{E}).$ It has the form
$$E^1_{s,t} = H_t(C_{n+1}(M ;X)^s, C_{n}(M ;X)^s) \Longrightarrow 0 \text{\quad for $s+t \leq n-1$}$$
and $n-1 \geq n/2$ for $n \geq 2$, so the range we wish to study is within the range that the spectral sequence converges to zero.

By Proposition \ref{prop:ConfigurationSpaceResolution} there is a relative Serre spectral sequence
\begin{eqnarray*}
\tilde{E}^2_{s,t} = H_t(\widetilde{C}_{i+1}(M;X) ; H_s(C_{n-i}(M_{i+1} ; X), C_{n-i-1}(M_{i+1} ; X))) \\ \nonumber\Longrightarrow H_{s+t}(C_{n+1}(M ; X)^i, C_{n}(M ; X)^i),
\end{eqnarray*}
and applying Theorem \ref{thm:Main} to $M_{i+1}$ and $(n-i-1)$ points we see that $\tilde{E}^2_{s,t}=0$ for $2s \leq n-i-1$. Thus $H_{*}(C_{n+1}(M ; X)^i, C_{n}(M ; X)^i)$ is trivial in degrees $2* \leq n-i-1$, and the inclusion of the fibre to the total space gives a homology epimorphism in degrees $2* \leq n-i+1$.

This implies that $E^1_{s,t}=0$ for $2t \leq n-s-1$, so for $2(s+t) \leq n+s-1$. Thus the augmentation
$$d_1 : H_*(C_{n+1}(M ;X)^0, C_{n}(M ;X)^0) \lra H_*(C_{n+1}(M ;X), C_{n}(M ;X))$$
is an epimorphism in degrees $2* \leq n$. By the Serre spectral sequence calculation above, the inclusion of the relative fibre
$$(C_{n}(M_1 ;X), C_{n-1}(M_1 ;X)) \lra (C_{n+1}(M ;X)^0, C_{n}(M ;X)^0)$$
over a point $(1, t) \in \widetilde{C}_1(M;X)$ gives a homology epimorphism in degrees $2* \leq n+1$. 

The space $C_n(M_1;X)$ consists of configurations of fewer points, but on the punctured manifold $M_1$ instead of $M$. We need to relate this to $C_n(M;X)$, so it is necessary to study the map $p_{\mathcal{E}'}: C_n(M;X) \to C_n(M_1;X)$ that adds a puncture to the manifold $M$ near the point $\mathcal{E}'$, and its natural partner, the map $f_1 : C_n(M_1;X) \to C_n(M;X)$ that fills in a puncture. These are defined identically to the stabilisation maps in \S \ref{sec:StabMaps}. The composition
$$C_n(M;X) \overset{p_{\mathcal{E}'}}\lra C_n(M_1;X) \overset{f_1}\lra C_n(M;X)$$
is homotopic to the identity, so studying the homological effect of one map is equivalent to studying the homological effect of the other.

These maps are \emph{not} homology equivalences, but they are in the relative situation. To be precise, let $\mathcal{E}$ and $\mathcal{E}'$ be distinct, and choose neighbourhoods of them that are disjoint to define the maps $s_\mathcal{E}$ and $p_{\mathcal{E}'}$. Then these maps commute, and so $p_{\mathcal{E}'}$ defines a relative map $p_{\mathcal{E}'}: R_n(M,\mathcal{E}) \to R_n(M_1,\mathcal{E})$.

\begin{prop}\label{prop:Punctures}
Suppose Theorem \ref{thm:ConfSpaceBasic} holds up to $(n-1)$. The relative homology groups $H_*(C_{n+1}(M;X), C_n(M;X))$ exhibit homological stability for adding and removing punctures; more precisely, the maps $R_n(M,\mathcal{E}) \overset{p_{\mathcal{E}'}}\to R_n(M_1,\mathcal{E}) \overset{f_1}\to R_n(M,\mathcal{E})$ are both homology equivalences in degrees $2* \leq n+1$.
\end{prop}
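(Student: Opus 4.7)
The plan is to prove the claim for $f_1$ and deduce the one for $p_{\mathcal{E}'}$ from $f_1 \circ p_{\mathcal{E}'} \simeq \mathrm{id}$; this isotopy is supported near $\mathcal{E}'$ and therefore commutes with $s_{\mathcal{E}}$, so it lifts to an isotopy of pairs $R_n(M,\mathcal{E}) \to R_n(M,\mathcal{E})$. The key geometric observation is that $f_1 : C_n(M_1;X) \hookrightarrow C_n(M;X)$ is the inclusion of an open subspace. Modelling $M_1 = M \setminus \{q\}$ for an interior point $q$ near $\mathcal{E}'$ (homotopy-equivalently $M$ with a small open disk removed, which is still the interior of a compact manifold with boundary and eligible for the inductive hypothesis), this inclusion has closed complement
$$Z_n := \{(C, \ell) \in C_n(M;X) : q \in C\} \;\cong\; C_{n-1}(M_1;X) \times X,$$
where the $X$-factor records the label that used to sit at $q$.

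The next step is a Thom-type isomorphism $H_*(C_n(M;X), C_n(M_1;X)) \cong H_{*-d}(Z_n)$. Choosing a coordinate ball $B \cong \mathbb{R}^d$ around $q$, the stratum of configurations with exactly one point in $B$ is canonically parametrised by $Z_n \times B$ via $(C, \ell) \mapsto ((C \setminus \{p\}) \cup \{q\},\, p - q)$, with $Z_n$ as the zero section. Applying excision to an open tubular neighborhood of $Z_n$ built from $B$, together with the Künneth formula for $(B, B \setminus 0)$, yields the desired isomorphism. Since $\mathcal{E}$ and $\mathcal{E}'$ have disjoint supports, $s_{\mathcal{E}}$ commutes with $f_1$ and restricts on $Z_n$ to $s_{\mathcal{E}} \times \mathrm{id}_X$; hence the Thom isomorphism is natural across the stabilisation square and upgrades to
$$H_*\bigl(R_n(M,\mathcal{E}),\, R_n(M_1,\mathcal{E})\bigr) \;\cong\; H_{*-d}\bigl(R_{n-1}(M_1,\mathcal{E}) \times X\bigr).$$

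Finally, the inductive hypothesis applied to the manifold $M_1$ and $(n-1)$ points gives $H_*(R_{n-1}(M_1,\mathcal{E})) = 0$ for $2* \leq n-1$, and path-connectedness of $X$ allows Künneth to propagate this vanishing to $R_{n-1}(M_1,\mathcal{E}) \times X$ in the same range. Hence the relative groups $H_*(R_n(M,\mathcal{E}), R_n(M_1,\mathcal{E}))$ vanish whenever $2(*-d) \leq n-1$, which for $d \geq 2$ comfortably covers $2* \leq n+3$; the long exact sequence of the pair then forces $f_1$ to be a homology isomorphism in degrees $2* \leq n+1$, sharp when $d = 2$. The main obstacle I anticipate is the Thom-type isomorphism itself: because $C_n(M;X)$ need not be a manifold when $X$ isn't, the usual tubular neighborhood theorem is unavailable, and some care is required to verify that the obvious product chart on a neighborhood of $Z_n$ is genuinely a homeomorphism with respect to the quotient topology inherited from $\widetilde{C}_n(M;X)$; once this is in hand, everything else is routine excision and long-exact-sequence bookkeeping.
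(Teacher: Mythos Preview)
Your approach is essentially the same as the paper's: both identify the relative pair $(C_n(M;X), C_n(M_1;X))$ via excision with $C_{n-1}(M_1;X)_+ \wedge X_+ \wedge (D, D\setminus 0)$, observe this identification is natural for $s_\mathcal{E}$, and then invoke the inductive hypothesis on $M_1$ with $(n-1)$ points. The technical obstacle you flag is exactly what the paper handles: rather than a tubular neighbourhood of $Z_n$, it takes the open cover by $U = \{\text{configurations with a \emph{unique} closest point to $0$ in $D$}\}$ and $V = \{\text{configurations not containing $0$}\}$, so that $U$ is genuinely open, the fibration $U \to D \times X$ (recording the closest point and its label) is trivial with fibre $C_{n-1}(M_1;X)$, and excision on $(U\cup V, V)$ gives the desired identification without any manifold hypothesis on $X$.
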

\begin{proof}
Let $D$ be the closed $\mathrm{dim}(M)$-dimensional disc of unit radius, and choose an open embedding $e: D \hookrightarrow M$. We will decompose the space $C_n(M;X)$ into a pair of open sets: let $U \subset C_n(M;X)$ be the subspace consisting of those configurations with a unique closest point in $D$ to $0$; let $V \subset C_n(M;X)$ be the subspace consisting of those configurations with no point in $D$ at $0$. The sets $U$ and $V$ give an open cover of $C_n(M;X)$. We identify the homotopy types of $U$, $V$ and $U \cap V$ as follows.

\begin{enumerate}[(i)]
\item There is a fibre sequence $C_{n-1}(M_1;X) \to U \to D \times X$, where the second map picks out the unique closest point in $D$ to 0 and its label, and this fibration is trivial.
\item There is a homotopy equivalence $V \cong C_n(M \setminus 0;X) \simeq C_n(M_1;X)$.
\item There is a fibre sequence $C_{n-1}(M_1;X) \to U \cap V \to (D \setminus 0) \times X$, which is the restriction of the fibration in (i) and hence trivial.
\end{enumerate}

By excision, the homology of the pair
$$(U \cup V, V) \simeq (C_n(M;X), C_n(M_1;X))$$
is canonically isomorphic to that of the pair
$$(U, U \cap V) \simeq C_{n-1}(M_1;X)_+ \wedge X_+ \wedge (D, D\setminus 0).$$
Then, as the homotopy cofibre of $f_1 : R_n(M_1,\mathcal{E}) \lra R_n(M,\mathcal{E})$ can be identified with that of
$$s_\mathcal{E} : (C_n(M;X), C_n(M_1;X)) \lra (C_{n+1}(M;X), C_{n+1}(M_1;X)),$$
its homology can be identified with that of the homotopy cofibre of
$$C_{n-1}(M_1;X)_+ \wedge X_+ \wedge S^{\mathrm{dim}(M)} \overset{s_\mathcal{E} \wedge \mathrm{Id}}\lra C_n(M_1;X)_+ \wedge X_+ \wedge S^{\mathrm{dim}(M)}$$
which is $\lfloor\tfrac{n-1}{2}\rfloor + \mathrm{dim}(M) \geq \lfloor\tfrac{n+1}{2}\rfloor$ connected, by applying Theorem \ref{thm:ConfSpaceBasic} for $(n-1)$.
\end{proof}

\begin{rem}
A corollary to the proof of the above proposition is the existence of the homotopy cofibre sequence
\begin{equation}\label{eq:CofSeq}
C_n(M \setminus *) \lra C_n(M) \lra S^d \wedge C_{n-1}(M \setminus *)_+.
\end{equation}
We will come back to this sequence in \S \ref{sec:Closed}.
\end{rem}

Let $\mathcal{E}' \in \partial \bar{M}$ lie in the same path component of the boundary as $\mathcal{E}$. By this proposition, the map
$$p_\mathcal{E'} : (C_{n}(M ;X), C_{n-1}(M ;X)) \lra (C_{n}(M_1 ;X), C_{n-1}(M_1 ;X))$$
which adds a puncture near $\mathcal{E}'$ also induces a homology epimorphism in degrees $2* \leq n$. Thus the composition
\begin{eqnarray*}
(C_{n}(M ;X), C_{n-1}(M ;X)) \overset{p_{\mathcal{E}'}}\lra (C_{n}(M_1 ;X), C_{n-1}(M_1 ;X)) \lra\\
(C_{n+1}(M ;X)^0, C_{n}(M ;X)^0) \overset{\epsilon}\lra (C_{n+1}(M ;X), C_{n}(M ;X))    
\end{eqnarray*}
is a homology epimorphism in degrees $2* \leq n$. We will now show that the map is also trivial in this range of degrees, which will imply Theorem \ref{thm:Main}. 

This composition corresponds to the commutative square
\begin{diagram}
C_{n-1}(M ;X) & \rTo^{s_\mathcal{E}} & C_{n}(M ;X)\\
\dTo^{s_{\mathcal{E}'}} & \ldTo[dotted]^{\mathrm{Id}} & \dTo^{s_{\mathcal{E}'}}\\
C_{n}(M ;X) & \rTo^{s_\mathcal{E}} & C_{n+1}(M ;X).
\end{diagram}
Choosing a path $\mathcal{E} \leadsto \mathcal{E}'$ in $\partial\bar{M}$ gives a homotopy $H$ making the bottom triangle commute, and a homotopy $G$ making the top triangle commute. We would like to assert that this determines a relative nullhomotopy from the top pair of spaces to the bottom pair, but this in not true: it merely gives a factorisation up to homotopy
$$(C_{n}(M ;X), C_{n-1}(M ;X)) \overset{\partial}\lra (I, \partial I) \wedge C_{n-1}(M;X)_+ \overset{\tau}\lra (C_{n+1}(M ;X), C_{n}(M ;X))$$
where $\partial$ is the connecting map in the Puppe sequence, and $\tau$ is the map determined by the self-homotopy of the map $s_\mathcal{E} \circ s_{\mathcal{E}'}$ given by
$$s_\mathcal{E} \circ s_{\mathcal{E}'} \overset{G}\leadsto s_\mathcal{E} \circ s_{\mathcal{E}} \overset{H}\leadsto s_{\mathcal{E}'} \circ s_{\mathcal{E}} = s_\mathcal{E} \circ s_{\mathcal{E}'}.$$
Considering the homotopies $H$ and $G$, we see that the effect of $\tau$ on homology is determined by the map
$$T_\mathcal{E} : S^1 \times C_{n-1}(M;X) \overset{t \times \mathrm{Id}}\lra C_2(\bR^d) \times C_{n-1}(M;X) \overset{\text{glue}_\mathcal{E}}\lra C_{n+1}(M;X)$$
evaluated on the fundamental class of $S^1$, where $t$ is the loop that interchanges the two configuration points and the gluing map adds two new points near $\mathcal{E}$.

\begin{lem}\label{lem:TauIsZero}
Suppose Theorem \ref{thm:ConfSpaceBasic} holds up to $(n-2)$. The map $\tau$ is trivial on homology in degrees $2* \leq  n$. If $d > 2$ it is trivial on homology with $\bZ[\tfrac{1}{2}]$-module coefficients in all degrees.
\end{lem}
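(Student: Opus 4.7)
The plan is to reduce both claims to a statement about the class $t_*[S^1] \in H_1(C_2(\bR^d))$. By naturality of the gluing map $\mu \colon C_2(\bR^d) \times C_{n-1}(M;X) \to C_{n+1}(M;X)$ and the K\"unneth theorem, the effect of $\tau$ on a class $[S^1] \otimes \alpha$ is $\mu_*(t_*[S^1] \otimes \alpha)$, so understanding when $t_*[S^1]$ vanishes (possibly with coefficient restrictions, or up to terms absorbed by the image of $s_\mathcal{E}$) will imply the conclusion.

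For the $\bZ[\tfrac{1}{2}]$-statement when $d > 2$, I would first compute $\pi_1(C_2(\bR^d))$. The ordered configuration space $F(\bR^d,2) = \{(x,y) \in \bR^d \times \bR^d : x \neq y\}$ deformation retracts onto $S^{d-1}$ (translate the midpoint to the origin and normalize), which is simply connected for $d \geq 3$. The double cover $F(\bR^d,2) \to C_2(\bR^d)$ then gives $\pi_1(C_2(\bR^d)) \cong \Sigma_2 = \bZ/2$, and since the loop $t$ swaps the two points it projects to the non-trivial element of $\Sigma_2$ and hence generates $\pi_1$. By Hurewicz, $2 t_*[S^1] = 0$ in $H_1(C_2(\bR^d);\bZ)$, so $t_*[S^1]$ vanishes with $\bZ[\tfrac{1}{2}]$-coefficients, and therefore so does $\tau$.

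For the stability-range statement in arbitrary dimension $d \geq 2$, the above $\pi_1$-argument fails when $d=2$ (since $\pi_1(C_2(\bR^2)) \cong \bZ$ is generated by $[t]$ and torsion-free), so the inductive hypothesis is essential. My plan is to show that, in the range $2* \leq n$, $T_\mathcal{E}$ factors up to homologically negligible error through the stabilization $s_\mathcal{E} \colon C_n(M;X) \to C_{n+1}(M;X)$: such a factorization forces the composition to vanish modulo $C_n(M;X)$ in the pair $(C_{n+1}, C_n)$. Concretely I would incorporate one of the two new points in via a prior $s_\mathcal{E}$, reducing the problem to a residual loop of the single remaining new particle on the punctured manifold $M_1 = M \setminus *$; by Proposition \ref{prop:Punctures} (which uses Theorem \ref{thm:ConfSpaceBasic} up to $n-2$) this puncture is inessential on homology in the relevant range.

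The hard part will be making the factorization-through-$s_\mathcal{E}$ step rigorous when $d=2$: one must carefully distinguish the half-twist loop $t$ from the ``move one new point around the other'' loop, which corresponds to $2[t] \in \pi_1(C_2(\bR^2))$ and \emph{does} manifestly factor through $s_\mathcal{E}$. The factorization must therefore use the inductive hypothesis to absorb the resulting factor of $2$ into the relative homology, and this is what I expect to tether the conclusion to the range $2* \leq n$ rather than to all degrees.
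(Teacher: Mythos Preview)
Your treatment of the $\bZ[\tfrac{1}{2}]$-claim for $d>2$ is correct and is exactly the paper's argument: $H_1(C_2(\bR^d);\bZ)\cong\bZ/2$ for $d>2$, so $t_*[S^1]$ dies after inverting $2$.

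For the general claim, your strategy has the right shape---factor $T_\mathcal{E}$ through a stabilisation so that the composite into the pair $(C_{n+1},C_n)$ is null---but you are factoring through the wrong stabilisation. You attempt to peel off one of the \emph{two new} points near $\mathcal{E}$, and you correctly identify that this cannot work for $d=2$: the half-twist $t$ genuinely does not factor through the inclusion $C_1(\bR^2)\hookrightarrow C_2(\bR^2)$, and there is no mechanism by which the inductive hypothesis ``absorbs a factor of $2$'' here. As written this is a gap.

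The paper sidesteps the difficulty by stabilising the \emph{old} configuration instead. Choose $\mathcal{E}'$ in the same boundary component as $\mathcal{E}$ but with disjoint support for $s_{\mathcal{E}'}$ and $T_\mathcal{E}$. Then the square
\[
\begin{array}{ccc}
S^1 \times C_{n-1}(M;X) & \xrightarrow{\ T_\mathcal{E}\ } & C_{n+1}(M;X) \\[2pt]
\big\uparrow\, \mathrm{Id}\times s_{\mathcal{E}'} & & \big\uparrow\, s_{\mathcal{E}'} \\[2pt]
S^1 \times C_{n-2}(M;X) & \xrightarrow{\ T_\mathcal{E}\ } & C_{n}(M;X)
\end{array}
\]
commutes on the nose, since the two operations occur in disjoint regions. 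The right-hand $s_{\mathcal{E}'}$ followed by projection to $(C_{n+1},C_n)$ is null (because $s_{\mathcal{E}'}\simeq s_\mathcal{E}$). Hence $\tau$ vanishes on the image of $\mathrm{Id}\times s_{\mathcal{E}'}$, and the inductive hypothesis says precisely that this map is a homology surjection in degrees $2(*-1)\leq n-2$, i.e.\ $2*\leq n$. No puncturing, no Proposition~\ref{prop:Punctures}, and no case distinction on $d$ are needed.
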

\begin{proof}
For the statement about $\bZ[\tfrac{1}{2}]$-module coefficients, note that
$$t_*[S^1] \in H_1(C_2(\bR^d);\bZ)$$
which is $\bZ/2$ as long as $d > 2$, and so is trivial with any $\bZ[\tfrac{1}{2}]$-module coefficients. In general, we have a commutative diagram
\begin{diagram}
S^1 \times C_{n-1}(M ;X) & \rTo^{T_\mathcal{E}} & C_{n+1}(M ;X) & \rTo & (C_{n+1}(M ;X), C_{n}(M ;X))\\
\uTo^{s_{\mathcal{E}'}} &  & \uTo^{s_{\mathcal{E}'}} & \ruTo_*\\
S^1 \times C_{n-2}(M ;X) & \rTo^{T_\mathcal{E}} & C_{n}(M ;X)
\end{diagram}
as long as $n \geq 2$. The left hand map is a homology surjection in degrees $2(*-1) \leq n-2$ by assumption.
\end{proof}

Thus the map
$$(C_{n}(M ;X), C_{n-1}(M ;X)) \lra (C_{n+1}(M ;X), C_{n}(M ;X))$$
is both a homology surjection and zero in degrees $2* \leq n$, and hence $R_n(M, \mathcal{E})$ has trivial homology in degrees $2* \leq n$.

\section{The transfer and decompositions}\label{sec:Transfer}

There is a technique implicit in the work of Nakaoka \cite{NakaokaDecomposition} and axiomatised by Dold \cite{DoldDecomposition}, which is extremely useful in the study of configuration spaces, and more generally for spaces constructed out of free symmetric group actions. It will allow us to deduce the full statement of Theorem \ref{thm:Main} from Theorem \ref{thm:ConfSpaceBasic}.

Let $C_{n,m}(M)$ denote the space of configurations of $n$ points in $M$, grouped into sets of size $m$ and $n-m$ respectively. Forgetting the groupings gives a covering map
$$C_{n,m}(M) \lra C_{n}(M)$$
of degree $\binom{n}{m}$, and we denote by $t_{n, m}$ the map on homology
$$t_{n,m} : H_*(C_n(M);\bZ) \overset{\trf}\lra H_*(C_{n, m}(M);\bZ) \overset{\text{forget}}\lra H_*(C_{m}(M);\bZ)$$
constructed using the transfer map for this covering. We denote by $t_n$ the map $t_{n, n-1}$, and by $i_n$ the stabilisation map
$$H_*(C_n(M);\bZ) \overset{s_\mathcal{E}}\lra H_*(C_{n+1}(M);\bZ).$$
The definition of the transfer gives the equation
$$t_n \circ i_{n-1} = i_{n-2} \circ t_{n-1} + \mathrm{Id}$$
and more generally
$$t_{n, m} \circ i_{n-1} = i_{n-2} \circ t_{n-1, m-1} + t_{n-1, m}.$$
Furthermore, we have that $t_{m+1} \circ \cdots \circ t_{n} = (n-m)! \cdot t_{n, m}$, and this determines the relations
$$\binom{n-k}{n-m}\cdot t_{m, k} \circ t_{n,m} = t_{n, k}$$
between the transfer maps. If we denote by $A_n$ the group $H_i(C_n(M);\bZ)$ then we find ourselves in the situation described by Dold \cite[Lemma 2]{DoldDecomposition}, and writing
$$B_n := \Coker(i_{n-1} : H_i(C_{n-1}(M);\bZ) \to H_i(C_{n}(M);\bZ))$$
Dold shows inductively that
$$\mathrm{Id} \oplus t_{n, n-1} \oplus t_{n, n-2} \oplus \cdots \oplus t_{n, n} : A_n \lra B_n \oplus B_{n-1} \oplus B_{n-2} \oplus \cdots \oplus B_{0}$$
is an isomorphism, and hence that the $i_n$ are all split injective. Furthermore, he shows that rationally the endomorphism $t_{n+1} \circ i_n$ of $A_n$ preserves this decomposition, and is multiplication by a non-zero scalar on each summand; hence it is an isomorphism. This establishes

\begin{prop}\label{prop:Trf}
The maps $s_\mathcal{E}$ are split injective in integral homology in all degrees, and the map $t_n : H_*(C_n(M);\bQ) \to H_*(C_{n-1}(M);\bQ)$ is an isomorphism in the range $i_{n-1}$ is.
\end{prop}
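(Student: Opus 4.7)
My plan is to exploit the formal structure of the transfer maps $t_{n,m}$ and the stabilisation maps $i_n$, which satisfy a system of relations formally identical to those axiomatised by Dold. The essential input is entirely algebraic once one has verified the transfer identities.

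First I would verify the two key identities $t_{n,m} \circ i_{n-1} = i_{n-2} \circ t_{n-1,m-1} + t_{n-1,m}$ and $\binom{n-k}{n-m}\cdot t_{m,k} \circ t_{n,m} = t_{n,k}$. The first comes from a geometric double-coset-style decomposition of the covering $C_{n+1,m}(M) \to C_{n+1}(M)$ after stabilisation: a configuration of size $n+1$ grouped into sets of size $m$ and $n+1-m$ either places the newly added point in the distinguished group or in the complement, giving the two summands. The second relation is just the compatibility of transfers under composition of coverings, combined with the combinatorial fact that $t_{m+1} \circ \cdots \circ t_n = (n-m)! \cdot t_{n,m}$.

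Next I would invoke Dold's lemma \cite[Lemma 2]{DoldDecomposition} with $A_n \deq H_i(C_n(M);\bZ)$ and $B_n \deq \Coker(i_{n-1})$. The proof of that lemma is a direct induction: assuming the decomposition holds for $A_{n-1}$, one checks using the two identities above that the map
$$\mathrm{Id} \oplus t_{n,n-1} \oplus \cdots \oplus t_{n,0} : A_n \lra B_n \oplus B_{n-1} \oplus \cdots \oplus B_0$$
is an isomorphism. This immediately gives split injectivity of every $i_n$ on integral homology, establishing the first claim.

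For the rational statement, I would then observe that the endomorphism $t_{n+1} \circ i_n$ of $A_n \otimes \bQ$ preserves the summand decomposition (this follows from the first transfer identity by chasing indices), and that on each summand $B_k$ it acts by a non-zero rational scalar determined by the binomial coefficient $\binom{n+1-k}{1}$ that arises when re-expressing the composite. Hence $t_{n+1} \circ i_n$ is a rational isomorphism, so in any range where $i_{n-1}$ is a rational isomorphism, so is $t_n$. The main obstacle is really just bookkeeping of the transfer identities; no geometric input beyond the existence of the covering maps and their standard transfer is required.
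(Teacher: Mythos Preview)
Your proposal is correct and follows essentially the same route as the paper: both verify the transfer identities, invoke Dold's Lemma~2 with $A_n = H_i(C_n(M);\bZ)$ to obtain the decomposition into the $B_k$'s and hence split injectivity, and then argue rationally that $t_{n+1}\circ i_n$ is diagonal with non-zero scalars on this decomposition. The only difference is that you make the scalar $\binom{n+1-k}{1}$ explicit, which the paper leaves implicit.
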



\begin{proof}[Proof of Theorem \ref{thm:Main}]
By the first part of Proposition \ref{prop:Trf} the maps $s_\mathcal{E}$ are split injective in integral homology in all degrees. By Theorem \ref{thm:ConfSpaceBasic} the maps $s_\mathcal{E} : C_n(M;X) \to C_{n+1}(M;X)$ are integral homology surjections in degrees $2* \leq n$. Hence they are isomorphisms in this range of degrees.
\end{proof}

\section{An improvement}\label{sec:Improvement}

Let us briefly describe how the stability range may be improved after we make certain assumptions. In particular, let us assume that $\dim(M) >2$ and that we take rational coefficients. The following is a restatement of Theorem \ref{thm:MainRational} from the introduction.

\begin{thm}\label{thm:Inverting2}
Let $M$ be a connected open manifold which is the interior of a manifold with boundary and has dimension greater than 2. Then each map $s_\mathcal{E} : C_n(M;X) \to C_{n+1}(M;X)$ is a rational homology isomorphism in degrees $* \leq n$.
\end{thm}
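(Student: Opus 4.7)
The plan is to combine the split injectivity from Proposition~\ref{prop:Trf} with a strengthening of the spectral sequence argument used for Theorem~\ref{thm:ConfSpaceBasic}. By Proposition~\ref{prop:Trf} each $s_\mathcal{E}$ is a split injection on rational homology in every degree, so the theorem reduces to showing $H_*(R_n(M,\mathcal{E});\bQ) = 0$ in degrees $* \leq n$. I would proceed by strong induction on $n$. Two improvements become available when $d = \dim M > 2$: a rational upgrade of Proposition~\ref{prop:Punctures}, whose proof now produces a cofibre that is $(n-1+d)$-connected so that the puncture-comparison maps are rational isomorphisms in degrees $\leq n$ for $d \geq 3$; and the $\bZ[\tfrac{1}{2}]$-coefficient clause of Lemma~\ref{lem:TauIsZero}, which makes $\tau$ rationally trivial in \emph{all} degrees rather than only in the range $2* \leq n$.

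With the strengthened inductive hypothesis, the relative Serre spectral sequence of Proposition~\ref{prop:ConfigurationSpaceResolution} yields $H_*(C_{n+1}(M;X)^i, C_n(M;X)^i;\bQ) = 0$ for $* \leq n-i-1$, so the main spectral sequence~(\ref{SSRelativeAugmentedRestrictedSimplicialSpace}) has $E^1_{s,t} = 0$ for every $s+t \leq n-1$. Since the relative resolution is $n$-connected, this already forces $H_t(R_n(M,\mathcal{E});\bQ) = E^1_{-1,t} = 0$ for every $t \leq n-1$, because every differential entering $E^r_{-1,t}$ in this range has source in the vanishing region. For the top degree $t = n$, the chain of maps
\[
R_{n-1}(M,\mathcal{E}) \xrightarrow{p_{\mathcal{E}'}} R_{n-1}(M_1,\mathcal{E}) \xrightarrow{i} (C_{n+1}(M;X)^0, C_n(M;X)^0) \xrightarrow{\epsilon} R_n(M,\mathcal{E})
\]
from the proof of Theorem~\ref{thm:ConfSpaceBasic} has composition homotopic to $s_\mathcal{E}$ via the factorisation through $\tau$, hence rationally zero in all degrees by the upgraded Lemma~\ref{lem:TauIsZero}. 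Both $p_{\mathcal{E}'}$ and $i$ are rational isomorphisms in degree $n$ by, respectively, the improved Proposition~\ref{prop:Punctures} and the Serre spectral sequence with its deeper vanishing. If $\epsilon$ is also rationally surjective in degree $n$, the composition is simultaneously a rational surjection onto $H_n(R_n(M,\mathcal{E});\bQ)$ and rationally zero, forcing $H_n(R_n(M,\mathcal{E});\bQ) = 0$ and completing the induction.

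The main obstacle is verifying that $\epsilon$ is rationally surjective in degree $n$. In the original range $2* \leq n$ of Theorem~\ref{thm:ConfSpaceBasic} this is automatic because every higher differential $d_r$ ($r \geq 2$) entering $E^r_{-1,t}$ has source in the vanishing region; at $t = n$ the sources $E^1_{r-1,n-r+1}$ instead lie on the boundary antidiagonal $s+t=n$ and are not forced to vanish by the inductive hypothesis alone. My plan is to identify these sources via the Serre spectral sequence and the rational Proposition~\ref{prop:Punctures} with $H_{n-r+1}(R_{n-r}(M);\bQ)$, and then to rule out their contribution by a recursive application of $\tau$-triviality from Lemma~\ref{lem:TauIsZero} at stage $n-r$ of the induction, propagating the vanishing to force the higher differentials into $E^r_{-1,n}$ to be rationally zero.
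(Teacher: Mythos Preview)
Your overall architecture matches the paper's: reduce via split injectivity to showing $H_*(R_n(M,\mathcal{E});\bQ)=0$ for $*\le n$, use the improved inductive vanishing to get $E^1_{s,t}=0$ for $s+t\le n-1$ in the relative augmented spectral sequence (which immediately handles degrees $\le n-1$), and then in degree $n$ combine surjectivity of $\epsilon$ with the rational $\tau$-triviality clause of Lemma~\ref{lem:TauIsZero}. You have also correctly isolated the only real obstacle, namely that surjectivity of $\epsilon$ in degree $n$ requires the higher differentials $d_r:E^r_{r-1,\,n-r+1}\to E^r_{-1,n}$ to vanish for $2\le r\le n+1$.

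The gap is your proposed mechanism for that vanishing. Identifying the sources with (quotients of) $H_{n-r+1}(R_{n-r}(M_r);\bQ)$ is fine, but these groups sit one degree \emph{above} the inductive vanishing range and need not be zero. Your phrase ``recursive $\tau$-triviality at stage $n-r$'' does not touch $d_r$: Lemma~\ref{lem:TauIsZero} at that stage controls the relative map $R_{n-r-1}\to R_{n-r}$, not any differential internal to the resolution of $R_n$, and there is no evident factorisation of $d_r$ through it. The paper supplies the missing ideas via two quite specific constructions you have not anticipated. For $2\le r\le n$ it builds a map of augmented resolutions
\[
R_{n-k-1}(M,\mathcal{E}) \wedge R_k^\bullet(\bR^d) \lra R_n^\bullet(M,\mathcal{E}) \qquad (k=r-1),
\]
whose auxiliary spectral sequence surjects onto $E^1_{k,n-k}$ while having target $H_n\bigl(R_{n-k-1}(M)\wedge R_k(\bR^d);\bQ\bigr)=0$ by the inductive hypothesis applied to both factors; this forces $d_{k+1}=0$. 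The longest differential $d_{n+1}:H_0(\widetilde C_{n+1}(M))\to H_n(R_n(M))$ is handled by an entirely separate lemma: a transfer argument shows that for any nonseparating positive-codimension submanifold $N\subset M$ containing $\mathcal{E}$ the map $R_n(N)\to R_n(M)$ is zero on $\bZ[\tfrac{1}{n+1}]$-homology, and one applies this to a trivially embedded $D^2\subset M$ (here is where $\dim M>2$ is used again) to factor $d_{n+1}$ through zero. These two constructions, not a further appeal to $\tau$, are the substance of the improvement over Theorem~\ref{thm:ConfSpaceBasic}.
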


Given the injectivity of stabilisation maps, the statement is equivalent to showing that $R_n(M,\mathcal{E})$ has trivial rational homology in degrees $* \leq n$. This is certainly true for $n=0$ (as $C_0(M;X) = *$). For the inductive step, we proceed as in the proof of Theorem \ref{thm:Main}. The $E^1$-page of the spectral sequence now has the following form, and converges to zero in total degree $\leq n-1$.
\begin{figure}[h]
\centering
\includegraphics[bb = 0 0 176 147]{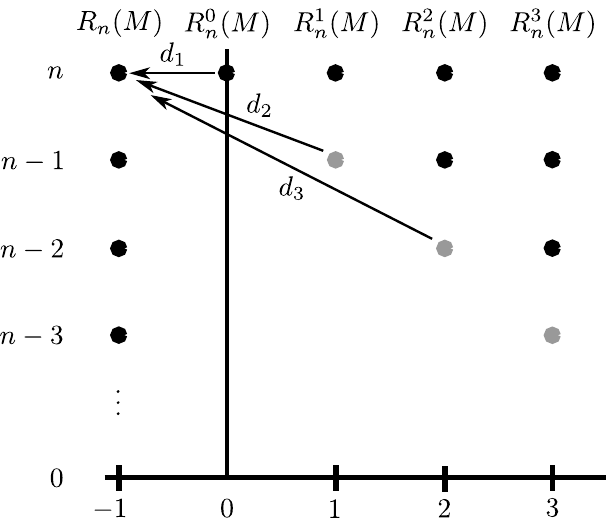}
\label{fig:E1}
\end{figure}

\begin{prop}
The indicated differentials $d_2$, $d_3, \ldots, d_{n}$ are zero.
\end{prop}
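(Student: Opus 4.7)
The plan is to exploit the sparsity of the $E^1$-page that the rational inductive hypothesis forces. By induction on $n$, applying Theorem \ref{thm:Inverting2} to the $n-s-1$ points on the punctured manifold $M_{s+1}$ and using the relative Serre spectral sequence for the fibration
\[
C_{n-s-1}(M_{s+1};X) \lra C_n(M;X)^s \lra \widetilde{C}_{s+1}(M;X)
\]
from Proposition \ref{prop:ConfigurationSpaceResolution}, one obtains $E^1_{s,t} = 0$ whenever $s+t < n$. Consequently the $E^1$-page is concentrated on the diagonal $s+t=n$ and above (for $s\geq 0$), together with the $s=-1$ column, and each of the indicated differentials $d_r \co E^r_{r-1,n-r+1} \to E^r_{-1,n}$ for $2 \leq r \leq n$ originates from a diagonal entry.

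Next I would identify the diagonal entries explicitly. Since $\widetilde{C}_{s+1}(M;X)$ is path-connected, the bottom row of the Serre spectral sequence gives a canonical identification
\[
E^1_{s,n-s} \cong H_{n-s}\bigl(C_{n-s}(M_{s+1};X),\, C_{n-s-1}(M_{s+1};X);\bQ\bigr).
\]
The strategy is then to prove that $E^2_{s,n-s} = 0$ for every $s \geq 0$, which immediately forces the source of each $d_r$ with $r \geq 2$ to vanish. The incoming $d^1$ from $E^1_{s+1,n-s}$ is the alternating sum $\sum_{j=0}^{s+1}(-1)^j (d_j)_*$ of face maps. Each $(d_j)_*$ is induced by forgetting a distinguished point, which on the Serre $E^2$-bottom-row amounts to filling in a puncture and promoting the forgotten distinguished point to a configuration point; combining Proposition \ref{prop:Punctures} with the inductive hypothesis, each $(d_j)_*$ is a rational isomorphism in this range. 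Moreover, using Lemma \ref{lem:TauIsZero} and the assumption $\dim(M) > 2$, any permutation ambiguity between different $(d_j)_*$ acts trivially on rational homology, so all the face maps induce the \emph{same} map $D$ on $E^1_{s,n-s}$.

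The remaining obstacle is the parity mismatch: the alternating sum of $s+2$ copies of $D$ is either $D$ (for $s$ odd) or $0$ (for $s$ even), so $d^1$ is not automatically surjective in the even case. This is where I would invoke the transfer decomposition of Proposition \ref{prop:Trf}: rationally, $H_*(C_n(M))$ splits into a direct sum of stabilisation-cokernels, and the spectral sequence respects this splitting. On each summand, one either has $E^2_{s,n-s}=0$ directly from the alternating sum, or the summand is itself zero because it lies outside the appropriate transfer-cokernel. Once $E^2_{s,n-s}=0$ for all $s \geq 0$, all the $d_r$ in the range $2 \leq r \leq n$ vanish trivially, as their sources are already zero. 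The main technical difficulty is carrying out the parity/transfer step, specifically checking compatibility of the transfer splitting with the face maps of the resolution $C_n(M;X)^\bullet$.
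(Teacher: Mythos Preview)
Your vanishing range $E^1_{s,t}=0$ for $s+t<n$ is correct, and so is the identification of the diagonal entries via the bottom row of the relative Serre spectral sequence. The problems start after that.

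First, the appeal to Lemma~\ref{lem:TauIsZero} does not show that all face maps $(d_j)_*$ agree on $E^1_{s,n-s}$. That lemma concerns one specific self-homotopy $\tau$ arising from the augmentation $R_{n-1}(M)\to R_n^0(M)\to R_n(M)$; it says nothing about the higher face maps $d_j:R_n^{s+1}\to R_n^s$ for $s\ge 1$, which on fibres are maps $R_{n-s-2}(M_{s+2})\to R_{n-s-1}(M_{s+1})$ between relative groups on \emph{different} punctured manifolds. There is no single map $D$ here to compare them to.

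Second, even granting equality of face maps, your own parity analysis shows the alternating sum need not be surjective, and the proposed cure via Proposition~\ref{prop:Trf} does not work. The Nakaoka--Dold decomposition splits $H_*(C_n(M);\bQ)$, but the terms $E^1_{s,n-s}$ are already single cokernels $H_{n-s}(R_{n-s-1}(M_{s+1});\bQ)$; there is no further splitting to exploit. The face maps are not stabilisation maps and there is no reason for them to respect the transfer idempotents. The sentence ``the spectral sequence respects this splitting'' is exactly the missing content, and I do not see how to supply it.

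The paper's argument avoids the $d^1$-analysis entirely. For each $1\le k\le n-1$ it constructs a map of augmented resolutions
\[
R_{n-k-1}(M,\mathcal{E})\wedge R_k^\bullet(\bR^d)\;\lra\;R_n^\bullet(M,\mathcal{E})
\]
coming from gluing a Euclidean disc near $\mathcal{E}$. The auxiliary spectral sequence $\bar{E}$ has the same vanishing range by induction, so $\bar{E}^1_{k,n-k}$ consists of permanent cycles; but the comparison map $\bar{E}^1_{k,n-k}\to E^1_{k,n-k}$ is surjective (this is the analogue of Proposition~\ref{prop:Punctures}), and its target in the augmentation column is $H_n(R_{n-k-1}(M)\wedge R_k(\bR^d))=0$ since $R_k(\bR^d)$ is rationally acyclic in degrees $\le k$. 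Hence each $d_{k+1}$ factors through zero. This bypasses any parity or transfer considerations.
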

\begin{proof}
Let $1 \leq k \leq n-1$ and consider the map
$$C_{n-k}(M;X) \times C_{k+1}(\bR^d;X) \lra C_{n+1}(M;X),$$
where $d = \dim(M)$, which glues on near $\mathcal{E}$ a disk with $(k+1)$ configuration points in it. The subspace
$$s_\mathcal{E}(C_{n-k-1}(M;X)) \times C_{k+1}(\bR^d;X) \cup C_{n-k}(M;X) \times s_\mathcal{E}(C_k(\bR^d;X))$$
may be compressed into $s_\mathcal{E}(C_n(M;X))$, which induces
$$R_{n-k-1}(M, \mathcal{E}) \wedge R_{k}(\bR^d) \lra R_n(M, \mathcal{E}).$$
This extends to a map of resolutions $R_{n-k-1}(M, \mathcal{E}) \wedge R^\bullet_{k}(\bR^d) \to R^\bullet_n(M, \mathcal{E})$, which gives a map $\bar{E}^r_{s,t} \to E^r_{s,t}$ of augmented spectral sequences. Just as for $E^r_{s,t}$, by the inductive hypothesis we may compute that $\bar{E}^1_{s,t}$ is zero for $t+s \leq n-1$ and $s \geq -1$.
On $k$-simplices we have the map
$$R_{n-k-1}(M, \mathcal{E}) \wedge \widetilde{C}_{k+1}(\bR^d)_+ \lra R_n^{k}(M, \mathcal{E}),$$
which is surjective in degrees $* \leq n-k$ by the analogue of Proposition \ref{prop:Punctures}. The group $\bar{E}^1_{k, n-k}$ consists of permanent cycles, so on the associated map of spectral sequences we have the commutative square
\begin{diagram}
H_{n-k}(R_n^k(M, \mathcal{E})) & \rTo^{d_{k+1}} & H_n(R_n(M, \mathcal{E}))\\
\uOnto & & \uTo\\
H_{n-k}(R_{n-k-1}(M, \mathcal{E}) \wedge \widetilde{C}_{k+1}(\bR^d;X)_+) & \rTo^{d_{k+1}} & H_n(R_{n-k-1}(M, \mathcal{E}) \wedge R_{k}(\bR^d))=0
\end{diagram}
and so the image of the differential $d_{k+1}$ is zero, as required.
\end{proof}

\begin{lem}
Let $N \subset M$ be a nonseparating submanifold of positive codimension containing $\mathcal{E}$. Then the map $R_n(N) \to R_n(M)$ is zero on homology with $\bZ[\tfrac{1}{n+1}]$-module coefficients.
\end{lem}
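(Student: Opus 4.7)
The plan is a transfer argument using the degree-$(n+1)$ covering $\pi : C_{n+1,1}(M) \to C_{n+1}(M)$ of \S\ref{sec:Transfer} that distinguishes one of the $n+1$ configuration points. Its restriction over the stabilisation subspace $s_\mathcal{E}(C_n(M)) \subset C_{n+1}(M)$ has a natural preimage inside $C_{n+1,1}(M)$, yielding a relative cover of pairs whose transfer satisfies $\pi_* \circ \trf = (n+1)\cdot \mathrm{Id}$ on homology. Consequently, after inverting $n+1$, it suffices to lift the map $R_n(N) \to R_n(M)$ along $\pi$ and show that the resulting map of pairs is nullhomotopic.

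The key geometric observation is the following: if the distinguished point in a configuration $(C,p) \in C_{n+1,1}(M)$ lies in the small disk $D \subset M$ near $\mathcal{E}$ used in the definition of $s_\mathcal{E}$, then after forgetting the distinguishment the underlying configuration lies in $s_\mathcal{E}(C_n(M))$, hence inside the subspace part of the pair $R_n(M,\mathcal{E})$; the composition to the mapping cone is therefore null. It thus suffices to homotope the inclusion $C_{n+1,1}(N) \hookrightarrow C_{n+1,1}(M)$ to a map where the distinguished point lies in $D$, continuously over $C_{n+1,1}(N)$. Such a homotopy is built by moving $p$ first off $N$ using a section of the normal bundle of $N$ in $M$ (available by positive codimension of $N$) and then flowing it through $M \setminus N$ into the collar of $\mathcal{E}$ (possible because $M \setminus N$ is path-connected, which is the content of the nonseparating hypothesis when $N$ has codimension one, and automatic otherwise). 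The remaining $n$ points of the configuration are kept fixed in $N$ throughout, and they do not obstruct a flow supported in $M \setminus N$.

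The principal technical obstacle will be the continuity of the pushoff over all configurations, particularly when the normal bundle of $N$ is non-orientable (a concern only in the codimension-one case). One option is a partition-of-unity construction producing a vector field on $M$ whose time-one flow pushes every point of $N$ off the zero section of the normal bundle and then into the collar of $\mathcal{E}$; another is to pass to the two-fold orientation cover of the normal bundle and apply a further transfer argument there. Either way, combining the resulting nullhomotopy with the transfer identity from the first paragraph yields the lemma.
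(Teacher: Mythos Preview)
Your approach is essentially the paper's: both hinge on the $(n+1)$-fold cover that distinguishes one point, together with the geometric fact that this distinguished point can be moved from $N$ to the collar of $\mathcal{E}$ through $M\setminus N$. The paper packages this slightly differently. Rather than setting up a relative transfer on the $M$ side, it uses the cover $\pi : C_{n+1,1}(N) \to C_{n+1}(N)$ on the $N$ side, whose homological surjectivity (after inverting $n+1$) reduces the question to showing that the composite $C_{n+1,1}(N) \to C_{n+1}(N) \to C_{n+1}(M)$ lands, up to homotopy, in $s_\mathcal{E}(C_n(M))$; this is exactly your ``move the distinguished point'' homotopy, but recorded as commutativity of a square whose other route is $C_{n+1,1}(N)\xrightarrow{\text{forget}} C_n(M) \xrightarrow{s_\mathcal{E}} C_{n+1}(M)$. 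The paper also invokes split injectivity of $s_\mathcal{E}$ to replace the relative groups by cokernels, so that the conclusion follows by a one-line diagram chase rather than a relative nullhomotopy. Your phrase ``lift along $\pi$'' is a little loose---no section of the cover exists in general---but since you immediately pass to $C_{n+1,1}(N)$ (the pullback of $\pi$ along the inclusion) it is clear you mean the same thing.

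Your care about the normal bundle is a point the paper simply glosses over: it asserts the homotopy exists without discussing how to choose the push-off direction continuously. Since the sole application in the paper takes $N$ to be a trivially embedded $2$-disc, the issue never actually arises there; your proposed fixes (vector field, or a further two-fold transfer) would handle the general case.
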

\begin{proof}
As stabilisation maps are split injective, we have a map of short exact sequences
\begin{diagram}
0 & \rTo & H_*(C_n(N;X)) & \rTo^{s_\mathcal{E}} & H_*(C_{n+1}(N;X)) & \rTo & H_*(R_n(N)) & \rTo & 0\\
 & & \dTo & & \dTo^i & & \dTo\\
0 & \rTo & H_*(C_n(M;X)) & \rTo^{s_\mathcal{E}} & H_*(C_{n+1}(M;X)) & \rTo & H_*(R_n(M)) & \rTo & 0.
\end{diagram}
We also have a commutative square
\begin{diagram}
H_*(C_{n+1, 1}(N;X)) & \rTo^\pi & H_*(C_{n+1}(N;X))\\
\dTo^{forget} & & \dTo^{i}\\
H_*(C_{n}(M;X)) & \rTo^{s_\mathcal{E}} & H_*(C_{n+1}(M;X))
\end{diagram}
where $\pi : C_{n+1, 1}(N;X) \to C_{n+1}(N;X)$ is the $(n+1)$-fold covering map. This square commutes as the forgotten point may be connected to $\mathcal{E}$ inside the complement of $N$, which gives a homotopy between the two compositions. As the map $\pi$ is surjective with $\bZ[\tfrac{1}{n+1}]$-module coefficients, it follows that the image of $i$ is contained in the image of $s_\mathcal{E}$ and hence that the right hand map in the first diagram is zero.
\end{proof}

\begin{prop}
The differential $d_{n+1} : H_0(R^n_n(M)) \to H_n(R_n(M))$ is zero with $\bZ[\tfrac{1}{n+1}]$-module coefficients.
\end{prop}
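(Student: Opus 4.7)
The plan is to exploit naturality of the augmented spectral sequence (RAsSS) with respect to the inclusion of a codimension-one nonseparating submanifold, and to use the immediately preceding lemma to kill the target of the differential.

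First I would identify the source of $d_{n+1}$. Since no $n$-point configuration contains $n+1$ distinct marked points, $C_n(M;X)^n = \emptyset$, hence $R_n^n(M,\mathcal{E}) = (C_{n+1}(M;X)^n,\emptyset)$. The fibre sequence of Proposition \ref{prop:ConfigurationSpaceResolution} has contractible fibre $C_0(M_{n+1};X)=*$, so $C_{n+1}(M;X)^n \simeq \widetilde{C}_{n+1}(M;X)$. Because $\dim M > 2$ and $X$ is path connected, this space is path connected, so $H_0(R_n^n(M,\mathcal{E}))\cong \bZ$.

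Next I would choose a submanifold $N \subset M$ of positive codimension with $\dim N \geq 2$, nonseparating in $M$, and with $\mathcal{E} \in \partial \bar{N}$. The hypothesis $\dim M > 2$ permits such a choice: one can take $N$ to be a codimension-one properly embedded submanifold of $\bar{M}$ passing through $\mathcal{E}$ and arranged so that $\bar{M}\setminus N$ remains path connected. The inclusion $N \hookrightarrow M$ is compatible with $s_\mathcal{E}$ and with the semi-simplicial resolution, and so induces a map of (RAsSS) spectral sequences intertwining all differentials. On $E^1_{n,0}$ the induced map is $H_0(R_n^n(N,\mathcal{E}))\to H_0(R_n^n(M,\mathcal{E}))$, which is an isomorphism $\bZ \to \bZ$ since both $\widetilde{C}_{n+1}(N;X)$ and $\widetilde{C}_{n+1}(M;X)$ are path connected.

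To conclude, by naturality of $d_{n+1}$, for a generator $x \in H_0(R_n^n(M,\mathcal{E}))$ lifted to $\tilde{x} \in H_0(R_n^n(N,\mathcal{E}))$, we have $d_{n+1}^M(x) = i_*\bigl(d_{n+1}^N(\tilde{x})\bigr)$, where $i_*\colon H_n(R_n(N,\mathcal{E})) \to H_n(R_n(M,\mathcal{E}))$ is induced by inclusion. The preceding lemma asserts that $i_*$ vanishes on $\bZ[\tfrac{1}{n+1}]$-module coefficients, hence $d_{n+1}^M(x) = 0$ with such coefficients, proving the proposition.

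The only non-routine step I anticipate is the geometric construction of $N$: one must ensure that $\bar{M}$ admits a codimension-one, properly embedded, nonseparating submanifold containing $\mathcal{E}$ in its boundary, so that $R_n(N,\mathcal{E})$ is well defined and the preceding lemma applies directly. This is a standard codimension-one construction given $\dim M > 2$, but requires a moment's care to set up so that the resulting $R_n(N,\mathcal{E}) \to R_n(M,\mathcal{E})$ fits into the naturality argument above.
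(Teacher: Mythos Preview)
Your proposal is correct and follows essentially the same strategy as the paper: compare with a nonseparating positive-codimension submanifold through $\mathcal{E}$, use surjectivity on $H_0$ of the $n$-simplices, and apply naturality of the spectral sequence together with the preceding lemma. The only difference is the choice of submanifold: the paper takes a trivially embedded $2$-disc $D^2 \subset M$, which exists and is obviously nonseparating since $\dim M > 2$, thereby sidestepping the ``moment's care'' you flag for constructing a nonseparating codimension-one $N$.
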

\begin{proof}
Consider a trivially embedded 2-disc $D^2 \subset M$ which contains $\mathcal{E}$. We obtain a map $R_n(D^2, \mathcal{E}) \to R_n(M, \mathcal{E})$ and an associated map of resolutions $R^\bullet_n(D^2, \mathcal{E}) \to R^\bullet_n(M, \mathcal{E})$. On $n$-simplices this is
$$(\widetilde{C}_{n+1}(D^2), \emptyset) \lra (\widetilde{C}_{n+1}(M), \emptyset)$$
and in particular is surjective on zeroth homology. Comparing the spectral sequences for the two manifolds we see the same vanishing range for $D^2$ as for $M$, and we obtain the commutative square
\begin{diagram}
H_{0}(R_n^n(M, \mathcal{E})) & = & H_0(\widetilde{C}_{n+1}(M)) & \rTo^{d_{n+1}} & H_n(R_n(M, \mathcal{E}))\\
\uOnto & &  & & \uTo^0\\
H_{0}(R_n^n(D^2, \mathcal{E})) & = & H_0(\widetilde{C}_{n+1}(D^2))& \rTo^{d_{n+1}} & H_n(R_n(D^2, \mathcal{E}))
\end{diagram}
for the differential on the $(n+1)$-st page. Hence the differential is zero for $M$.
\end{proof}

These two propositions imply that $\epsilon: R_n^0(M, \mathcal{E}) \to R_n(M, \mathcal{E})$ is surjective on homology in degrees $* \leq n$. The analogue of Proposition \ref{prop:Punctures} shows that $R_{n-1}(M) \to R_n^0(M)$ is a homology surjection in degrees $* \leq n+1$, and so
$$R_{n-1}(M) \lra R_n^0(M) \overset{\epsilon}\lra R_n(M)$$
is a homology surjection in degrees $* \leq n$, but by Lemma \ref{lem:TauIsZero} this map is zero with $\bZ[\tfrac{1}{2}]$-module coefficients when $M$ has dimension $> 2$.

\section{Closed manifolds}\label{sec:Closed}

It has been known for some time that the homology of configuration spaces of closed manifolds does not stabilise. Certainly there are no natural stabilisation maps between them that could induce stabilisation. In fact, one may compute
$$H_1(C_n(S^2);\bZ) \cong \bZ/(2n-2)$$
from the presentation of the spherical braid group given in \cite{FvB}
and in a sense this example is representative of all that can go wrong. In the remainder of this section, we assume that $M$ has finite-dimensional $\bF$-homology in each degree, so that the same is true of $C_n(M)$.

\begin{thm}\label{thm:StableHomologyClosed}
Suppose that $d$ is odd and $\bF$ is any field, or that $d$ is even and $\bF$ is $\bF_2$ or $\bQ$. Then $\mathrm{dim}_\bF H_*(C_n(M);\bF)$ is independent of $n$ in the range that $H_*(C_n(M\setminus *);\bF)$ is stable.
\end{thm}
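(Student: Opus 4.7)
The starting point is the homotopy cofibre sequence (\ref{eq:CofSeq}),
\[
C_n(M_*) \lra C_n(M) \lra S^d \wedge C_{n-1}(M_*)_+,
\]
where $M_* := M \setminus *$. Passing to $\bF$-homology gives a long exact sequence
\[
\cdots \lra H_{i-d+1}(C_{n-1}(M_*);\bF) \overset{\partial_{i+1}}\lra H_i(C_n(M_*);\bF) \lra H_i(C_n(M);\bF) \lra H_{i-d}(C_{n-1}(M_*);\bF) \overset{\partial_i}\lra \cdots.
\]
By Theorem \ref{thm:Main} applied to the open manifold $M_*$, the groups $H_j(C_k(M_*);\bF)$ stabilize with $k$; let $a_j$ denote their common stable value and set $r_{n,i} := \mathrm{rk}_\bF(\partial_i)$. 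In the stable range the long exact sequence breaks into short exact sequences and yields
\[
\dim_\bF H_i(C_n(M);\bF) = a_i + a_{i-d} - r_{n,i} - r_{n,i+1}.
\]
Thus the theorem reduces to showing that $r_{n,i}$ is independent of $n$ in the stable range, and the cleanest realization of this is to verify $\partial_i = 0$ altogether.

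The next step is to identify $\partial_i$ geometrically. The cofibre sequence arises from excision on a small open disc $D \subset M$ around $*$: the pair $(C_n(M), C_n(M_*))$ is homotopy equivalent to $C_{n-1}(M_*)_+ \wedge (D, D\setminus 0)$, and unwinding the Puppe map one sees that $\partial_i$ factors as
\[
H_{i-d}(C_{n-1}(M_*);\bF) \overset{-\times [S^{d-1}]}\lra H_{i-1}(C_{n-1}(M_*) \times S^{d-1};\bF) \overset{\alpha_*}\lra H_{i-1}(C_n(M_*);\bF),
\]
where $\alpha$ adjoins a configuration point at a variable position on the boundary sphere $\partial D \cong S^{d-1}$. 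Up to homotopy, $\alpha$ is the stabilization map $s_\mathcal{E}$ of Section \ref{sec:StabMaps} for a chosen $\mathcal{E} \in \partial D$, twisted by the $S^{d-1}$-action on $C_n(M_*)$ that rotates the adjoined point around the sphere.

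The plan is then to argue $r_{n,i}$ is stable by case-by-case analysis of $\alpha_*$. For $d$ odd, the antipodal map on $S^{d-1}$ has degree $(-1)^d = -1$ and can be realised by an isotopy of $M_*$ supported in a collar of $\partial D$ under which $\alpha$ is invariant; this forces $\partial_i = -\partial_i$, hence $\partial_i = 0$ for $\mathrm{char}(\bF)\neq 2$, with the residual characteristic-two subcase closed off using $\chi(M) = 0$ for $\dim(M)$ odd. For $d$ even and $\bF = \bF_2$, I would argue directly using the known $\bF_2$-homology of $C_\bullet(\bR^d)$ transported along the stabilization isomorphism, exploiting that $\bF_2$ computations see no signs. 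For $d$ even and $\bF = \bQ$, the cleanest route is via the transfer: the $n$-fold covering $C_{n,1}(M) \to C_n(M)$ together with the fibration $C_{n-1}(M_*) \to C_{n,1}(M) \to M$ realizes $H_*(C_n(M);\bQ)$ as a rational subspace of $H_*(C_{n-1}(M_*);\bQ) \otimes H_*(M;\bQ)$ of dimension independent of $n$ in the stable range, simultaneously producing the transfer isomorphism asserted in Theorem \ref{thm:MainClosed}.

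The principal obstacle is that no single argument handles all three hypothesised cases uniformly: each requires a distinct handle on $\alpha_*$, leveraging in turn the antipodal symmetry of $S^{d-1}$, the sign-free algebra over $\bF_2$, and rational properties of transfers for closed manifolds. The failure of $\partial_i$ to vanish when $d$ is even and $\mathrm{char}(\bF)$ is an odd prime is precisely what should drive the failure of stability in the remaining cases, consistent with the example $H_1(C_n(S^2);\bZ) \cong \bZ/(2n-2)$ recorded at the start of Section \ref{sec:Closed}.
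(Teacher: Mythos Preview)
Your setup is correct and matches the paper: the cofibre sequence, the long exact sequence, the reduction to controlling the rank of the connecting map, and the geometric identification of $\partial$ via the map $p_n : S^{d-1} \times C_{n-1}(M_*) \to C_n(M_*)$ are all exactly what the paper uses. The gap is in what you then try to prove about $\partial$.

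You aim to show $\partial_i = 0$, but this is in general \emph{false}, even for $d$ odd. The map $p_n$ is the Browder-type operation coming from the class $\Delta \in H_{d-1}(C_1(S^{d-1}\times[0,1]))$, and this operation is typically nonzero on $H_*(C_\bullet(M_*);\bF)$. Your antipodal argument for $d$ odd does not work: for $d$ odd the antipodal map on $S^{d-1}$ is orientation-reversing, so it cannot be realised by an isotopy of $M_*$ supported in a collar; there is no homotopy making $\alpha$ invariant under it, and no conclusion $\partial = -\partial$ follows. The residual appeal to $\chi(M)=0$ and the $\bF_2$ sketch are correspondingly without force.

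What the paper actually proves is the weaker statement you need, that $\mathrm{rk}(\partial_{n,i})$ is independent of $n$ in the stable range, and the mechanism is a commutator computation. Writing $[1]$ for the stabilisation class and $\Delta$ for the class inducing $\partial$, both live in the $H$-space $\mathcal{R}=\coprod_n C_n(S^{d-1}\times[0,1])$ acting on $\coprod_n C_n(M_*)$, and one computes
\[
\Delta * [1] \;=\; [1]*\Delta \;+\; \tau, \qquad \tau \in H_{d-1}(C_2(\bR^d);\bZ)\cong H_{d-1}(\bR\bP^{d-1};\bZ).
\]
For $d$ odd this group vanishes, and over $\bF_2$ the class $\tau$ (which is twice a generator when $d$ is even) is zero; hence $[1]$ and $\Delta$ commute, stabilisation carries $\Ker(\Delta_n)$ and $\Coker(\Delta_n)$ to $\Ker(\Delta_{n+1})$ and $\Coker(\Delta_{n+1})$, and is an isomorphism on each in the stable range. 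For $d$ even over $\bQ$ the paper instead checks that the \emph{transfer} $t_n$ commutes with $\Delta$ (because $t_1(\Delta)\in H_{d-1}(C_0)=0$), and then uses that $t_n$ is a rational isomorphism in the stable range. Your rational sketch via $C_{n,1}(M)\to M$ is in the right spirit but would still need a stability statement for the Serre differentials, which is not automatic; the paper's route through the commuting square with $t_n$ avoids this.
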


The configuration space $C_n(M)$ of a closed manifold has a natural length two filtration
$$F_0 = C_n(M \setminus *) \subset F_1 = C_n(M).$$
The filtration quotient $F_1 / F_0$ is equivalent to $S^d \wedge C_{n-1}(M \setminus *)_+$ by (\ref{eq:CofSeq}), and so $F_0$ and $F_1 / F_0$ \emph{do} have homology stability. We can recover the suspension $\Sigma C_n(M)_+$ as the homotopy cofibre of the connecting map in the Puppe sequence,
$$F_1 / F_0 \overset{\Delta_n}\lra \Sigma {F_0}_+ \lra \Sigma C_n(M)_+,$$
and as the filtration quotients stabilise we are lead to studying the connecting map $\Delta_n : F_1 / F_0 \to \Sigma {F_0}_+$ and its effect on homology.

In the cofibre sequence
\begin{equation}\label{eq:CofSeqClosed}
C_n(M) \lra S^{d} \wedge C_{n-1}(M\setminus *)_+ \overset{\Delta_n}\lra \Sigma C_n(M \setminus *)_+ \dashrightarrow \Sigma C_n(M)_+,
\end{equation}
the effect of the map $\Delta_n$ on homology is determined by the map
$$p_n : S^{d - 1} \times C_{n-1}(M\setminus *) \lra C_n(M\setminus *)$$
which adds a new point all around the missing point $*$. The long exact sequence on homology is then
$$\cdots \lra H_{*-d+1}(C_{n-1}(M \setminus *);\bF) \overset{\Delta_n}\lra H_*(C_n(M \setminus *);\bF) \lra H_*(C_n(M);\bF) \lra \cdots.$$

Thus to understand the dimension of $H_*(C_n(M);\bF)$ we need to understand the rank and nullity of the maps $\Delta_n$, and more precisely understand how they vary with $n$. The operations $\Delta_n$, and the maps $p_n$ defining them, come from a more general construction. The disjoint union $\mathcal{R} := \coprod_{n \geq 0} C_n(S^{d-1} \times [0,1])$ admits the structure of an $H$-space by putting configurations in cylinders end to end. For each manifold $M$, the disjoint union $\mathcal{M}(M \setminus *) := \coprod_{n \geq 0} C_n(M \setminus *)$ is an $H$-module over $\mathcal{R}$, by identifying a collar neighbourhood of the puncture in $M \setminus *$ with $S^{d-1} \times [0,1]$.

The maps $\Delta_n$ all come from the $H_*(\mathcal{R};\bF)$-module structure on $H_*(\mathcal{M}(M \setminus *);\bF)$, using the homology class of 
$$\Delta : S^{d-1} \lra C_1(S^{d-1} \times [0,1]) \lra \mathcal{R}.$$
The stabilisation map is given by taking the product with the homology class of
$$[1] : * \lra C_1(S^{d-1} \times [0,1]) \lra \mathcal{R}.$$
In order to understand the kernel and cokernel of the maps $\Delta_n$, we must understand how the maps $\Delta$ and $[1]$ interact.

\subsection{The commutator of $\Delta$ and $[1]$}

The operations $\Delta$ and $[1]$ both come from $H_*(\mathcal{R};\bF)$, so we may consider the square
\begin{diagram}
S^{d - 1} \times C_{0}(S^{d-1} \times \bR) &\rTo^{\Delta * -} & C_{1}(S^{d-1} \times \bR)\\
\dTo^{[1] * -} & & \dTo^{[1] * -}\\
S^{d - 1} \times C_{1}(S^{d-1} \times \bR) &\rTo^{\Delta * -} & C_{2}(S^{d-1} \times \bR)
\end{diagram}where the vertical maps add a new point at the left of the cylinder.

\begin{lem}\label{lem:Commutator}
On integral homology this diagram does not commute, and instead satisfies
$$\Delta*[1] = [1]*\Delta + \tau$$
where $\tau \in H_{d-1}(C_2(\bR^d);\bZ)$ is the Hurewicz image of the map $S^{d-1} \to C_2(\bR^d)$ given by $v \mapsto \{0, v\}$. We consider $\tau$ as a homology class on $C_{2}(S^{d-1} \times \bR)$ via the map
$$C_2(\bR^d)\lra C_{2}(S^{d-1} \times \bR).$$
\end{lem}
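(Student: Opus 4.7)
The plan is to produce an explicit $d$-chain in $C_2(S^{d-1}\times\bR)$ whose boundary witnesses the asserted relation. As maps $S^{d-1}\to C_2(S^{d-1}\times\bR)$, one computes directly that $[1]*\Delta$ sends $v\mapsto\{(*,\tfrac12),(v,\tfrac32)\}$ and $\Delta*[1]$ sends $v\mapsto\{(v,\tfrac12),(*,\tfrac32)\}$, where $*\in S^{d-1}$ is the base point used to define $[1]$. The homology class $\tau$ of the map $v\mapsto\{0,v\}$ lives in $H_{d-1}(C_2(\bR^d);\bZ)$, so the claim is about equality of two classes in $H_{d-1}(C_2(S^{d-1}\times\bR);\bZ)$.

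First I would write down the naive candidate homotopy
$$H\co S^{d-1}\times[0,1]\lra (S^{d-1}\times\bR)^{\times 2}/\Sigma_2,\qquad H(v,s)=\{(*,\tfrac12+s),(v,\tfrac32-s)\},$$
which interpolates between the two configurations at $s=0$ and $s=1$. The obstruction to $H$ landing in the configuration space is that the two points collide precisely at $(v,s)=(*,\tfrac12)$, where both points sit at $(*,1)$. Next I would excise a small open disk $D^d$ about $(*,\tfrac12)$ from the parameter space, and on the compact $d$-manifold $(S^{d-1}\times[0,1])\setminus D^d$ perturb $H$ slightly so that it factors through $C_2(S^{d-1}\times\bR)$; this perturbation is unique up to contractible choice because the target is a manifold and the modification is supported in an arbitrarily small neighbourhood of the collision.

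This produces a $d$-chain whose boundary in $H_{d-1}(C_2(S^{d-1}\times\bR);\bZ)$ is, up to signs, the difference
$$[\Delta*[1]]\;-\;[[1]*\Delta]\;-\;[\,H|_{\partial D^d}\,].$$
It remains to identify $H|_{\partial D^d}$ with $\tau$. Near the collision, all configurations live inside a small Euclidean neighbourhood $U$ of $(*,1)\in S^{d-1}\times\bR$, so the map lands in $C_2(U)\simeq C_2(\bR^d)$; choosing coordinates $(u,r)$ for the linking sphere with $u\in S^{d-1}$ tangent to $S^{d-1}$ and $r$ the $[0,1]$-direction, the two points of $H(v,s)$ are asymptotically placed at $(0, r)$ and $(u, -r)$, so that the relative vector sweeps once around $S^{d-1}$ as $(u,r)$ traverses the linking sphere. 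This is exactly the Hurewicz image of $v\mapsto\{0,v\}$, i.e.\ the class $\tau$ pushed into $C_2(S^{d-1}\times\bR)$.

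The main obstacle is the boundary identification together with the sign: it has to be verified that the orientation on $\partial D^d\cong S^{d-1}$ induced from the excision agrees, up to the sign fixed by the convention for $\Delta$, with the fundamental class of $S^{d-1}$ used to define $\tau$. Once the model is set up this is a local Euclidean calculation that can be checked, for example, by doing it in coordinates and verifying in the case $d=2$ where everything can be drawn explicitly.
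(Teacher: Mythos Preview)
Your argument is correct and is essentially the same as the paper's: the paper's proof is a picture (Figure~\ref{fig:Tubes}) showing the two $(d-1)$-cycles in the cylinder and asserting that their difference ``has an evident homology'' to $\tau$, and your explicit homotopy $H(v,s)=\{(*,\tfrac12+s),(v,\tfrac32-s)\}$ with the collision at $(*,\tfrac12)$ excised is precisely a formula for that evident homology. One small remark: on $(S^{d-1}\times[0,1])\setminus D^d$ the map $H$ already lands in $C_2$, so no perturbation is needed; and your caveat about the sign is fair, since the paper's picture argument does not track it either.
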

\begin{proof}
The two ways around the diagram give the $(d-1)$-cycles shown in the first part of Figure \ref{fig:Tubes}.
\begin{figure}[h]
\centering
\includegraphics[bb = 0 0 374 45]{./figures/Commutator}
\caption{}
\label{fig:Tubes}
\end{figure}
Their difference has an evident homology to the $(d-1)$-cycle shown in the second part of the figure, which is the homology class $\tau$.
\end{proof}

\subsection{The case $d$ odd or $\bF = \bF_2$}

Note that $C_2(\bR^d) \simeq \bR\bP^{d-1}$, and $\tau \in H_{d-1}(C_2(\bR^d);\bZ)$ is either zero or divisible by 2 as $d$ is odd or even. In particular it is trivial over $\bF_2$ or when $d$ is odd, and so the elements $[1]$, $\Delta \in H_*(\mathcal{R};\bF)$ commute in these cases. But then $[1]*-$ induces maps
$$\Ker(\Delta_n) \lra \Ker(\Delta_{n+1}) \quad\quad \Coker(\Delta_n) \lra \Coker(\Delta_{n+1})$$
which are isomorphisms in the stable range. This establishes Theorem \ref{thm:StableHomologyClosed} in this case.

\subsection{The case $d$ even over $\bQ$}

In this case the element $\tau$ is non-trivial, and so the argument of the last section does not work. Instead we will consider the square
\begin{diagram}
H_{*-d+1}(C_{n-1}(M\setminus *);\bQ) & \rTo^{\Delta * -} & H_*(C_n(M\setminus *);\bQ)\\
\dTo^{t_{n-1}} & & \dTo^{t_n} \\
H_{*-d+1}(C_{n-2}(M\setminus *);\bQ) & \rTo^{\Delta * -} & H_*(C_{n-1}(M\setminus *);\bQ)
\end{diagram}
involving the transfer maps instead of the stabilisation maps.

\begin{lem}
This digram commutes.
\end{lem}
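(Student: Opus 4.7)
The plan is to compute $t_n(\Delta * x)$ directly by unwinding the definition of the transfer and exploiting that $\Delta * -$ is induced by a simple map out of a product. First I would represent the product as $\Delta * x = \mu_*([S^{d-1}] \times x)$, where $\mu : S^{d-1} \times C_{n-1}(M\setminus *) \to C_n(M\setminus *)$ first compresses the given configuration away from a small neighbourhood of $*$ and then inserts a new point at $(v, 1/2)$ on the resulting collar $S^{d-1} \times [0,1]$. The transfer $t_n$ factors as $\pi^{!}$ for the $n$-fold covering $\pi : C_{n, n-1}(M \setminus *) \to C_n(M \setminus *)$ followed by the forget-marked-point map to $C_{n-1}(M \setminus *)$.

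The key geometric point is that pulling back $\pi$ along $\mu$ gives a covering of $S^{d-1} \times C_{n-1}(M\setminus *)$ that splits canonically into two pieces, according to whether the marked point is (a) the newly inserted $\Delta$-point or (b) one of the $n-1$ points originating from $C_{n-1}(M\setminus *)$; this splitting is clean because the $\Delta$-point lies on a fixed sphere inside the collar while the compressed points lie outside it. Component (a) is the trivial $1$-sheeted cover, while component (b) is isomorphic to $S^{d-1} \times C_{n-1, n-2}(M\setminus *)$ with the $(n-1)$-sheeted projection induced from the analogous covering $\pi_{n-1}$. By naturality of the transfer in pullbacks,
$$\pi^{!}(\Delta * x) \;=\; \widetilde{\mu}^{(a)}_*\bigl([S^{d-1}] \times x\bigr) \;+\; \widetilde{\mu}^{(b)}_*\bigl([S^{d-1}] \times \pi_{n-1}^{!}(x)\bigr).$$

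Composing with the forget-marked-point map, piece (a) becomes the projection $S^{d-1} \times C_{n-1}(M\setminus *) \to C_{n-1}(M\setminus *)$, which annihilates $[S^{d-1}] \times x$ because $[S^{d-1}]$ sits in positive degree $d-1$; piece (b) factors as $\mu \circ (\mathrm{id}_{S^{d-1}} \times \mathrm{forget})$ and so contributes $\mu_*([S^{d-1}] \times t_{n-1}(x)) = \Delta * t_{n-1}(x)$. Summing these yields $t_n(\Delta * x) = \Delta * t_{n-1}(x)$, which is exactly commutativity of the square. The only step I expect to require genuine care is the canonical splitting of the pullback covering; this rests on arranging $\mu$ so that the $\Delta$-point is always disjoint from the rest of the configuration, which is achieved by a standard compression isotopy pushing configuration points out of the collar. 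The argument works over $\bZ$, so rationality of coefficients plays no role in this lemma itself; it is needed only downstream when combining with Proposition~\ref{prop:Trf}.
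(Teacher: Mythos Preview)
Your proof is correct and follows essentially the same approach as the paper: split the transfer according to whether the marked point is the newly inserted $\Delta$-point or one of the old points, and observe that the former contribution vanishes for degree reasons. The paper packages this a bit more succinctly as the identity $t_n(\Delta * m) = \Delta * t_{n-1}(m) + t_1(\Delta) * m$ and then notes $t_1(\Delta) \in H_{d-1}(C_0(M\setminus *);\bQ)=0$, which is exactly your observation that piece~(a) is killed because $[S^{d-1}]$ sits in positive degree.
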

\begin{proof}
The equation $t_n(\Delta * m) = \Delta * t_{n-1}(m) + t_1(\Delta) * m$ is seen to hold by considering the construction of the transfer: either the configuration point which is to be forgotten comes from $m$, or else it comes from $\Delta$. Commutativity of the diagram follows as $t_1(\Delta) \in H_{d-1}(C_0(M \setminus *);\bQ)=0$.
\end{proof}

By Proposition \ref{prop:Trf} the maps $t_i$ are rational isomorphisms in degrees $* \leq i-1$, and so the commutative diagram above gives that the kernel and cokernel of $\Delta_n$ have dimensions independent of $n$ in the stable range. The long exact sequence on homology for the cofibration sequence (\ref{eq:CofSeqClosed}) then implies that
$$\dim_\bQ H_*(C_n(M);\bQ) = \dim_\bQ H_*(C_{n-1}(M);\bQ)$$
for $* \leq n-1$. This establishes Theorem \ref{thm:StableHomologyClosed} in this case, and hence the first part of Theorem \ref{thm:MainClosed}.

\subsection{The transfer map}

Rather than simply knowing that the two rational vector spaces $H_*(C_n(M);\bQ)$ and $H_*(C_{n-1}(M);\bQ)$ are abstractly isomorphic, it is often useful to have a particular map giving the isomorphism. The following proposition establishes the last part of Theorem \ref{thm:MainClosed}.

\begin{prop}
The map $t_n : H_*(C_n(M);\bQ) \to H_*(C_{n-1}(M);\bQ)$ is an isomorphism for $* \leq n-1$.
\end{prop}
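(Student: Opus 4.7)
The plan is to apply the 5-lemma to a map of long exact sequences induced by the cofibre sequence \eqref{eq:CofSeq}, with vertical arrows given by the transfers. For each $n$, that cofibre sequence yields
\[
\cdots \to H_{*-d+1}(C_{n-1}(M\setminus *);\bQ) \xrightarrow{\Delta_n} H_*(C_n(M\setminus *);\bQ) \xrightarrow{i_*} H_*(C_n(M);\bQ) \xrightarrow{q_*} H_{*-d}(C_{n-1}(M\setminus *);\bQ) \to \cdots,
\]
and I would assemble the sequences for $n$ and $n-1$ into a ladder whose vertical maps on the open-manifold terms $H_*(C_k(M\setminus *);\bQ)$ are the corresponding transfers $t_k$, and whose middle vertical map is the transfer $t_n$ that we wish to study.

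The core of the argument is to verify that the three types of squares in this ladder commute. The square involving the inclusion $i_*$ commutes by the naturality of the transfer under pullbacks of finite covers: the $n$-fold cover $C_{n,1}(M) \to C_n(M)$ pulls back to $C_{n,1}(M\setminus *) \to C_n(M\setminus *)$ along $C_n(M\setminus *) \hookrightarrow C_n(M)$, so transfer commutes with inclusion. The square involving the connecting map $\Delta_n = \Delta * -$ is precisely the content of the commuting square proved in the previous subsection, via the identity $t_n(\Delta * m) = \Delta * t_{n-1}(m) + t_1(\Delta) * m$ together with the vanishing $t_1(\Delta) \in H_{d-1}(C_0(M\setminus *);\bQ) = 0$. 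The remaining square, involving the cofibre projection $q_*$, commutes by the following bookkeeping: given a configuration with one point ``near $*$'' and $n-1$ points ``far from $*$'', the transfer is a sum of $n$ terms indexed by which point is forgotten; the single term forgetting the near-$*$ point lies in $C_{n-1}(M\setminus *)$ and hence vanishes in the relative pair $(C_{n-1}(M), C_{n-1}(M\setminus *))$, while the remaining $n-1$ terms combine to yield $\mathrm{id}_{S^d} \wedge t_{n-1}$ on the cofibre $S^d \wedge C_{n-1}(M\setminus *)_+$.

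With the ladder in place, Proposition \ref{prop:Trf} applied to the open manifold $M\setminus *$ gives that the vertical transfers $t_k : H_*(C_k(M\setminus *);\bQ) \to H_*(C_{k-1}(M\setminus *);\bQ)$ are isomorphisms in the stable range. The four outer vertical maps in each 5-lemma window are therefore isomorphisms throughout the stable range, and the 5-lemma concludes that $t_n : H_*(C_n(M);\bQ) \to H_*(C_{n-1}(M);\bQ)$ is an isomorphism for $* \leq n-1$. The main obstacle is the verification of the third square: unlike the other two it cannot be deduced purely formally from naturality, and requires a careful unpacking of the Thom-space-like structure of the cofibre and the explicit action of the covering transfer upon it.
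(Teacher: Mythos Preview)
Your approach is correct and genuinely different from the paper's. Interestingly, the paper explicitly dismisses this route at the start of its proof, asserting that ``the cofibre sequence (\ref{eq:CofSeqClosed}) is of no use, as the connecting map $C_n(M) \to S^d \wedge C_{n-1}(M \setminus *)$ does not commute with transfers,'' and instead builds a new semi-simplicial resolution $D_n(M)^\bullet \to C_n(M)$ by tuples of points \emph{outside} the configuration, together with a fibrewise geometric transfer at the level of resolutions, and then compares Serre spectral sequences over $\widetilde{C}_{s+1}(M)$.

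Your argument shows that the paper's dismissal is too hasty. The point is that the covering transfer for $C_{n,1}(M)\to C_n(M)$ restricts to the pair $(C_n(M),C_n(M\setminus *))$, and the forget map $C_{n,1}(M)\to C_{n-1}(M)$ is a map of pairs, so one automatically gets a map of long exact sequences. The only nontrivial identification is the one you flag: under the excision $H_*(C_n(M),C_n(M\setminus *)) \cong H_{*-d}(C_{n-1}(M\setminus *))$, the induced relative transfer becomes $t_{n-1}$. Your bookkeeping for this is essentially right; a cleaner way to say it is that the analogous excision for $C_{n,1}(M)$ splits into two pieces according to whether the marked point is the one nearest $*$ or not, the first piece dies under ``forget'', and on the second piece the composite is exactly the $(n-1)$-fold covering transfer followed by forget. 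Once this is established, the $\Delta$-square commutes by naturality of the boundary map (and is consistent with the Lemma you cite), and the $i_*$-square by the pullback property you state. The five-lemma then yields the range $*\le n-1$ from Proposition~\ref{prop:Trf} applied to $M\setminus *$.

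In short: your proof is more elementary and avoids introducing the auxiliary resolution $D_n(M)^\bullet$ and the fibrewise symmetric-product machinery. The paper's route has the virtue of making the transfer visibly simplicial and thus manifestly compatible with the spectral sequence, but at the cost of considerably more setup. Your ``main obstacle'' is real but surmountable exactly as you outline; it would be worth writing out the relative-excision computation carefully rather than leaving it as heuristic bookkeeping.
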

\begin{proof}
Unfortunately the cofibre sequence (\ref{eq:CofSeqClosed}) is of no use, as the connecting map
$$C_n(M) \lra S^d \wedge C_{n-1}(M \setminus *)$$
does not commute with transfers. Instead, we consider a new semi-simplicial resolution $D_n(M)^\bullet \to C_n(M)$. The space of $i$-simplices is 
$$D_n(M)^i := \{(C, p_0,...,p_i) \in C_n(M) \times M^{i+1} \,\, | \,\, p_j \notin C, \, p_j \neq p_k \}.$$
There are maps $d_j : D_n(M)^i \to D_n(M)^{i-1}$ given by forgetting the $j$-th point, and $D_n(M)^i \to C_n(M)$ given by forgetting all points.

The map $\| D_n(M)^\bullet \| \to C_n(M)$ is a fibre bundle, with fibre over a configuration $C$ given by $\| F(M \setminus C)^\bullet \|$ where the space $F(M \setminus C)^i$ is the space of $(i+1)$ ordered points in $M \setminus C$. As $C$ is non-empty, there is a $\bar{M} \subset M \setminus C$ which is homotopy equivalent and which misses some point $x$. Adding a new point at $x$ gives maps
$$F(\bar{M})^i \lra F(M \setminus C)^{i+1}$$
which determine a semi-simplicial nullhomotopy of the inclusion $\| F(\bar{M})^\bullet \| \to \| F(M \setminus C)^\bullet \|$. On the other hand, this inclusion is an equivalence, and hence $\| F(M \setminus C)^\bullet \|$ is contractible and $\| D_n(M)^\bullet \| \to C_n(M)$ is an equivalence.

We can take the same sort of resolution for $C_{n,1}(M)$, and we then get semi-simplicial maps
$$D_n(M)^\bullet \lla D_{n,1}(M)^\bullet \lra D_{n-1}(M)^\bullet.$$

We wish to define a semi-simplicial transfer map reversing the leftwards map, for which it will be convenient to have a particular geometric model for transfer maps of finite coverings. Let $SP^n(X)$ denote the $n$-fold symmetric product of a space $X$. If $\pi : E \to B$ is an $n$-fold cover, we obtain a map
$$\trf_\pi : B \to SP^n(E)$$
sending a point $b$ to $\pi^{-1}(b) \subset E$, a subset of cardinality $n$. The transfer map on homology is obtained by composing with the summation map
$$H_*(SP^n(E)) \lra H_*(E)$$
coming from the action $SP^\infty(SP^n(X)) \to SP^\infty(E)$ and the Dold--Thom theorem.

If we write $\tilde{C}_i(M)$ for the ordered configuration space of $i$ points in $M$, there are fibrations
$$C_n(M \setminus \text{$i+1$ points}) \lra D_n(M)^i \lra \tilde{C}_{i+1}(M).$$
Let $SP^n_{fib}(D_n(M)^i)$ denote the $n$-fold \emph{fibrewise} symmetric product with respect to this fibration. The map $D_{n,1}(M)^i \to D_n(M)^i$ is a fibrewise $n$-fold covering space over $\tilde{C}_{i+1}(M)$, so there is a transfer map
$$\trf : D_n(M)^i \lra SP^n_{fib}(D_{n,1}(M)^i).$$
The square
\begin{diagram}
D_n(M)^i & \lTo & D_{n,1}(M)^i\\
\dTo^{d_j} & & \dTo^{d_j}\\
D_{n}(M)^{i-1} & \lTo & D_{n,1}(M)^{i-1}
\end{diagram}
is a pullback, and hence the geometric transfer gives a commuting square
\begin{diagram}
D_n(M)^i & \rTo^\trf & SP^n_{fib}(D_{n,1}(M)^i)\\
\dTo^{d_j} & & \dTo^{d_j}\\
D_{n}(M)^{i-1} & \rTo^\trf & SP^n_{fib}(D_{n,1}(M)^{i-1})
\end{diagram}
for each $i$ and $j$, as $d_j$ gives a fibrewise map over the map $\tilde{C}_{i+1}(M) \to \tilde{C}_{i}(M)$ that forgets the $j$-th point. This determines maps of resolutions
\begin{diagram}
D_n(M)^\bullet & \rTo^\trf & SP^n_{fib}(D_{n,1}(M)^\bullet) & \rTo & SP^n_{fib}(D_{n-1}(M)^\bullet)\\
\dTo & & \dTo & & \dTo\\
D_{n}(M) & \rTo^\trf & SP^n(D_{n,1}(M)) & \rTo & SP^n(D_{n-1}(M))
\end{diagram}
and so a map of spectral sequences converging to the transfer map $t_n:H_*(C_n(M);\bQ) \to H_*(C_{n-1}(M);\bQ)$, which at $E_2^{s,t}$ is 
\begin{equation}\label{eq:SSTrfMap}
H_t(D_n(M)^s;\bQ) \lra H_t(D_{n-1}(M)^s;\bQ)
\end{equation}
induced by $D_n(M)^i \to SP^n_{fib}(D_{n,1}(M)^i) \to SP^n_{fib}(D_{n-1}(M)^i)$. The Serre spectral sequence converging to this map takes the form
$$H_p(\tilde{C}_{s+1}(M); \mathcal{H}_q(C_n(M \setminus \text{$s+1$ points}))) \lra H_p(\tilde{C}_{s+1}(M); \mathcal{H}_q(C_{n-1}(M \setminus \text{$s+1$ points})))$$
on $E_2^{p, q}$, where we have omitted the rational coefficients, and is induced by the transfer map
$$t_n: H_q(C_n(M \setminus \text{$s+1$ points});\bQ) \lra H_q(C_{n-1}(M \setminus \text{$s+1$ points});\bQ)$$
on coefficients. As the manifold $\{M \setminus \text{$s+1$ points}\}$ is open, this map is an isomorphism in degrees $q \leq n-1$ by Proposition \ref{prop:Trf}. Hence the map of spectral sequences (\ref{eq:SSTrfMap}) is an isomorphism for $t \leq n-1$, and so the map
$$t_n:H_*(C_n(M);\bQ) \to H_*(C_{n-1}(M);\bQ)$$
to which the spectral sequence converges is also an isomorphism for $* \leq n-1$, as required.
\end{proof}

\bibliographystyle{plain}
\bibliography{../MainBib}

\end{document}